\newtheorem{theorem}{Theorem}
\newtheorem*{assumption*}{\assumptionnumber}
\providecommand{\assumptionnumber}{}
\newtheorem{assumption}{Assumption}
\newtheorem{lemma}{Lemma}
\newtheorem{definition}{Definition}
\newtheorem{remark}{Remark}
\DeclareMathOperator{\Tr}{\mathrm{Tr} }
\title{\huge On the Gradient Domination of the LQG Problem}
\author{
  \IEEEauthorblockN{Kasra Fallah, Leonardo F. Toso, and James Anderson\\}
  \thanks{The authors are with the Department of Electrical Engineering at Columbia University, New York, USA. Emails: \texttt{\{kasra.fallah,lt2879, james.anderson\}@columbia.edu}.}
}
\begin{document}
\maketitle

\begin{abstract}

We consider solutions to the linear quadratic Gaussian (LQG) regulator problem via policy gradient (PG) methods. Although PG methods have demonstrated strong theoretical guarantees in solving the linear quadratic regulator (LQR) problem—despite its nonconvex landscape, their theoretical understanding in the LQG setting remains limited. Notably, the LQG problem lacks gradient dominance in the classical parameterization, i.e., with a dynamic controller, which hinders global convergence guarantees. In this work, we study PG for the LQG problem by adopting an alternative parameterization of the set of stabilizing controllers and employing a lifting argument. We refer to this parameterization as a history representation of the control input as it is parameterized by past input and output data from the previous $p$ time-steps. This representation enables us to establish gradient dominance and approximate smoothness for the LQG cost. We prove global convergence and per-iteration stability guarantees for policy gradient LQG in model-based \emph{and} model-free settings. Numerical experiments on an open-loop unstable system are provided to support the global convergence guarantees and to illustrate convergence under different history lengths of the history representation.  

\end{abstract}

\begin{IEEEkeywords}
   Linear Quadratic Gaussian (LQG), Policy Gradient, Gradient Dominance. 
\end{IEEEkeywords}

\section{Introduction}

Policy gradient (PG) methods are popular in reinforcement learning (RL) for being model agnostic and easy to implement without requiring access to the underlying environment dynamics \cite{sutton1999policy}. These methods have been extensively studied in the context of linear-quadratic control design. In particular, despite its non-convex landscape, Fazel \emph{et al.} \cite{fazel_global_2019} established the global convergence of PG methods for the linear quadratic regulator (LQR) problem (referred here as PG-LQR). The global convergence of PG-LQR is mainly due to the fact that the LQR cost is gradient dominated~\cite{polyak1963gradient} and satisfies an ``approximate smoothness'' property. This result has since inspired a broad line of work with the linear convergence of PG-LQR derived in \cite{mohammadi2020linear}.

PG methods have also been applied to a broader class of control problems; for a comprehensive overview, see Hu \emph{et al.}~\cite{hu_toward_2023}. For instance, Zhang \emph{et al.} \cite{zhang_policy_2020}  investigated the landscape and convergence guarantees of PG methods for \(\mathcal{H}_2\) and \(\mathcal{H}_\infty\) robust control settings. In addition, \cite{wang2023model, wang2023robot, toso_meta-learning_2024, zhan2025coreset, ye2024convergence} consider the multi-task collaborative LQR design under task heterogeneity. The partially observed setting is addressed in \cite{zhao_globally_2023}, where the authors propose an input-output feedback representation of the control input to obtain gradient dominance of the LQR cost under partial observability, thereby offering a model agnostic approach for solving the LQR problem with partial observation.

However, the convergence guarantees for PG methods in the presence of process and measurement noise, i.e., for the classical linear quadratic Gaussian (LQG) problem are mostly negative. In fact, existing results highlight several fundamental challenges, including the lack of gradient dominance \cite{mohammadi_lack_2021} and the non-connectivity of the set of stabilizing controllers \cite{tang_analysis_2023}.
Recent work \cite{makdah_behavioral_2022, guo_imitation_2023} has explored the history representation\footnote{Also referred to as behavioral representation in \cite{makdah_behavioral_2022, guo_imitation_2023}.} of the LQG problem, demonstrating the separation principle under such a history parameterization and leveraging imitation learning for designing a near-optimal LQG control policy.

In this work, we reformulate the optimal LQG problem as a lifted LQR problem using the history representation similar to that introduced by Skelton \emph{et al.} \cite{skelton_data-based_1994} and further developed in \cite{makdah_behavioral_2022, guo_imitation_2023}. Under this reformulation, we demonstrate that an $\epsilon$-optimal controller can be found using policy gradient methods. Our contributions are summarized as:
\begin{itemize}
    \item We establish the gradient dominance property for the LQG cost under the history representation of the control input. In contrast to the classical parameterization of the LQG problem as considered in \cite{mohammadi_lack_2021}, the history representation offers a  ``benign" landscape where the gradient dominance and smoothness properties of the LQG problem are satisfied.

    \item Leveraging the smoothness of the quadratic cost function in the history representation, we prove that PG-LQG globally converges to the optimal controller for both model-based and model-free settings. In the model-free setting, we estimate the policy gradient of the LQG cost with respect to the lifted controller with a zeroth-order method and demonstrate that it converges to the optimal LQG policy under the proposed history representation. 
\end{itemize}
We emphasize that this is the first time that the LQG problem has been shown to admit a tractable solution using policy gradient algorithms.

\subsection{Notation} We use $\underline{\sigma}(\cdot)$ and $\rho(\cdot)$ to denote the minimum singular value and spectral radius of a matrix. $\|\cdot\|$ is the spectral norm and $\|\cdot\|_F$ denotes the Frobenius norm of a matrix. $\operatorname{Tr}(\cdot)$ denotes the trace function. Let $\texttt{dare}(A,B,Q,R)$  denote the stabilizing solution, $P$, to the discrete algebraic Riccati equation:
\[
 P = Q + A^\top P A - A^\top P B (R + B^\top P B)^{-1} B^\top P A
\]
and  $\texttt{dlyap}(A,Q)$ to denote the solution of the discrete-time Lyapunov equation: $\Sigma = Q + A^\top \Sigma A$.

\section{Problem Formulation}
\label{sec:LQG}
Consider the discrete, linear time-invariant  system  
\begin{equation}
  \label{LQG}
  \begin{aligned}
    x_{t+1} &= A x_t + B u_t + w_t, \\
    y_t &= C x_t + v_t, \text{ for } t = 0,1,2,\ldots,
  \end{aligned}
\end{equation}
where $x_t \in \mathbb{R}^{n_x}$ denotes the state, $u_t \in \mathbb{R}^{n_u}$ the control input, and $y_t \in \mathbb{R}^{n_y}$ the measured output of the system at time $t$. Let $w_t$ and $v_t$ denote the process and measurement noise at time $t$, respectively. The process and measurement noise are assumed to be i.i.d. with $w_t \overset{\text{i.i.d.}}{\sim} \mathcal{N}(0, W)$ and $v_t \overset{\text{i.i.d.}}{\sim} \mathcal{N}(0, V)$, where $W \succeq 0$ and $V \succ 0$. Moreover, the noise sequences $\{w_t\}_{t \geq 0}$ and $\{v_t\}_{t \geq 0}$ are assumed to be mutually independent.
\begin{assumption}
    The pair $(A,B)$ is assumed to be controllable, and $(A,C)$ observable.
\end{assumption}
The LQG objective is to design a control sequence $\{u_t\}_{t \geq 0}$ that minimizes the  ergodic quadratic cost:
\begin{equation}
  \label{eq:cost}
  J :=  \limsup_{T \to \infty}\frac{1}{T}\mathbb{E} \left[ \sum_{t=0}^{T-1}  y_t^\top Q y_t + u_t^\top R u_t  \right], \text{ s.t. \eqref{LQG}},
\end{equation}
where $Q \succeq 0$ and $R \succ 0$ are the output and control input weight matrices, respectively. The expectation is taken with respect to the control and noise sequences. It is well-known \cite{zhou_robust_1996} that given the system matrices $A$, $B$ and $C$, the control sequence that minimizes \eqref{eq:cost} admits the form 
\begin{equation}
  \label{eq:control-law}
  u_t = K^\star\hat{x}_t, \text{ for } t = 0,1,2,\ldots,
\end{equation}
where $\hat{x}_t$ denotes the state estimate described by
\begin{align}
\label{eq:sys}
\hat{x}_{t+1} &= \tilde{A}\,\hat{x}_t + \tilde{B}\, u_t + L\, y_{t+1}, 
\end{align}
with $\tilde{A} = \left(I - LC\right) A$ and $
\tilde{B} = \left(I - LC\right) B$, where \(L\) is the steady-state Kalman filter gain, and $K^\star$ is the optimal LQG controller. In particular, $K^\star$ and $L$ are given by 
\begin{align*}
    K^\star = -(R+B^\top P B)^{-1}B^\top PA, \;\ L = \widetilde{\Sigma}C^\top (C\widetilde{\Sigma}C^\top + V)^{-1},
\end{align*}
with $P = \texttt{dare}(A,B,\tilde{Q},R)$, $\widetilde{\Sigma} := \texttt{dare}(A^\top, C^\top, W,V)$, and cost matrix $\tilde{Q} = C^\top QC$. 

We emphasize that the solution described above requires access to  $(A,B,C)$. In practice, however, it is often the case that we do not have access to the system matrices, or it is undesirable to identify them explicitly. An alternative approach for solving \eqref{eq:cost} \emph{without} resorting to system identification is to estimate the gradient of the cost $J$ with respect to the control gain $K$ and perform PG updates in ``controller space''. 

However, the convergence of PG when minimizing \eqref{eq:cost} over the set of stabilizing controllers subject to \eqref{LQG} is hindered by two main challenges: 1) the cost \eqref{eq:cost} does not satisfy gradient dominance under the controller parameterization given by \eqref{eq:control-law} and \eqref{eq:sys} \cite{mohammadi_lack_2021}, i.e., under such a controller parametrization there is no guarantee that a stationary solution is globally optimal and that a small gradient norm would imply closeness to the optimal controller; 2) the set of stabilizing controllers itself is non‐convex with at most two disjoint (though nearly identical) connected components \cite{tang_analysis_2023,zheng_escaping_2022}. This ``non-benign" landscape is composed of multiple, non-isolated stationary solutions.

Motivated by these challenges and inspired by the representation proposed in \cite{skelton_data-based_1994}, we adopt this alternative parametrization of the set of stabilizing controllers. Under this re-parameterization, we are able to prove that~\eqref{eq:cost} is gradient dominated, and thus derive global convergence guarantees for policy gradient algorithms when solving the LQG problem.

\subsection{History Representation}
Let $p \in \mathbb{N}$ denote the history length, and $u_{t,p} \in \mathbb{R}^{n_up}$, $y_{t,p} \in \mathbb{R}^{n_yp}$ be the input and output history at time $t$, i.e.,
\begin{equation*}
\label{def:z_t}
    u_{t,p} = \begin{bmatrix} u_{t-1}\\ \vdots \\ u_{t-p} \end{bmatrix} \quad\text{and} \quad y_{t,p} = \begin{bmatrix} y_{t} \\ \vdots \\ y_{t-p+1} \end{bmatrix}, 
\end{equation*}
where we define $z_{t,p} = [u^\top_{t,p} \;\ y^\top_{t,p}]^\top \in \mathbb{R}^{p(n_u+n_y)}$ as the input-output history data. We assume that the history length is set sufficiently large, as in $p \geq n_y$. 
Given the input and output history $u_{t,p}$ and $y_{t,p}$ the closed-loop dynamics are given by
\begin{subequations}\label{eq:IoF}
\begin{align}
\hat{x}_t &= \tilde{A}^p  \hat{x}_{t-p}+\mathcal{F}_{u,p} u_{t, p} +\mathcal{F}_{y,p} y_{t, p}, \label{eq:IoF_5a} \\
u_{t, p} &= \mathcal{O}_{x,p}\hat{x}_{t-p}+\mathcal{T}_{u,p} u_{t, p} + \mathcal{T}_{y,p} y_{t, p}, \label{eq:IoF_5b}
\end{align}
\end{subequations}
with the following system matrices:
\[
\mathcal{T}_{u,p}=
\begin{bmatrix}
0 & K^\star \tilde{B} & K^\star \tilde{A} \tilde{B} & \cdots & K^\star \tilde{A}^{p-2} \tilde{B} \\
0 & 0 & K^\star \tilde{B} & \cdots & K^\star \tilde{A}^{p-3} \tilde{B} \\
\vdots & \vdots & \ddots & \ddots & \vdots \\
0 & \cdots & & 0 & K^\star \tilde{B} \\
0 & 0 & 0 & 0 & 0
\end{bmatrix},
\]
\[\mathcal{T}_{y,p}=
\begin{bmatrix}
0 & K^\star L & K^\star \tilde{A} L & \cdots & K^\star \tilde{A}^{p-2} L \\
0 & 0 & K^\star L & \cdots & K^\star \tilde{A}^{p-3} L \\
\vdots & \vdots & \ddots & \ddots & \vdots \\
0 & \cdots & & 0 & K^\star L \\
0 & 0 & 0 & 0 & 0
\end{bmatrix},\]
$$
\mathcal{F}_{u,p}=\left[
\tilde{B}\;\ \tilde{A} \tilde{B}   \;  \cdots \; \tilde{A}^{p-1} \tilde{B}
\right]
,
\mathcal{F}_{y,p}=\left[
L \;\ \tilde{A} L \;  \cdots \; \tilde{A}^{p-1} L
\right],
$$
and $\mathcal{O}_{x,p}=\left[
\tilde{A}^{p-1\top}K^{\star \top}
\cdots
\tilde{A}^\top K^{\star \top} \;\
K^{\star \top}
\right]^\top$.
Substituting \eqref{eq:IoF_5b} into \eqref{eq:IoF_5a}, the  state estimate $\hat{x}_t$ can be expressed as:
$$
\begin{aligned}
\hat{x}_t \hspace{-0.075cm}=\hspace{-0.075cm} 
\begin{bmatrix}
\mathcal{F}_{u,p} \hspace{-0.075cm}+\hspace{-0.075cm} \tilde{A}^p \mathcal{O}^\dagger_{x,p} (I \hspace{-0.075cm}-\hspace{-0.075cm} \mathcal{T}_{u,p}) \hspace{-0.075cm}&\hspace{-0.075cm}
\mathcal{F}_{y,p} \hspace{-0.075cm}-\hspace{-0.075cm} \tilde{A}^p \mathcal{O}^\dagger_{x,p} \mathcal{T}_{y,p}
\end{bmatrix}
\hspace{-0.075cm}z_{t,p},
\hspace{-0.075cm}
\end{aligned}
\label{eq:5}
$$
where the history representation $S^\star$ given by
$$
S^\star :=
\begin{bmatrix}
\mathcal{F}_{u,p} + \tilde{A}^p \mathcal{O}^\dagger_{x,p} (I - \mathcal{T}_{u,p}) &
\mathcal{F}_{y,p} - \tilde{A}^p \mathcal{O}^\dagger_{x,p} \mathcal{T}_{y,p}
\end{bmatrix},
$$
maps the historical data $z_{t,p}$ to the current  state estimate $\hat{x}_t$. Consequently, the control input is given by
\[
u_t ~=~ K^\star \hat{x}_t ~=~ K^\star S^\star z_{t,p} ~=:~ \tilde{K}^\star z_{t,p},
\]
where $ \tilde{K}$ is the ``lifted control gain''. We emphasize that a similar representation of the control input has been proposed in \cite{zhao_globally_2023, xie2024data} for the noiseless and partially observed LQR problem, and in \cite{makdah_behavioral_2022,guo_imitation_2023} for the LQG problem in the imitation and transfer learning framework. Our work is the first to leverage this history representation to demonstrate the global convergence of PG methods for the full LQG problem.

We  note that the representation $S^\star$ is the mapping between the classical parameterization of the LQG problem (i.e., \eqref{eq:control-law} and \eqref{eq:sys}) and the state history parameterization in \eqref{eq:IoF}. In addition, the representation depends on the system dynamics $(A,B,C)$ and ensures that all relevant information up to a history length $p$ is preserved. For this mapping to be well-defined (i.e., to allow for a bijection between the parameterizations), it is necessary for $S^\star$ to be full-rank. In fact, $S^\star$ \emph{is full-rank} and we can prove it by contradiction. If $S^\star$ is not full-rank, then there exists a nonzero vector $v_1 \in \mathbb{R}^{n_x}$ such that $v_1^\top S^\star = 0$ or $v_1^\top S^\star z_{t,p} =v_1^\top \hat{x}_t = 0$. This implies that $v_1$ is orthogonal to any estimation of the state in $\mathbb{R}^{n_x}$. However, since the system is assumed to be controllable, any state in  \( \mathbb{R}^{n_x} \) can be reached with an appropriate control sequence, and the existence of $v_1 \neq 0$ contradicts controllability. Hence, $S^\star$ is full-rank with pseudoinverse $S^{\star \dagger} = S^{\star \top} \left(S^\star S^{\star \top}\right)^{-1}$.

Moreover, the optimal LQG controller can be written as $K^\star := \tilde{K}^\star S^{\star \dagger}$. Hence, the set of stabilizing controllers for the LQG problem in the history representation is given by
\begin{equation*}
\label{eq:stabilizing_K_set}
\mathcal{K} = \left\{ \tilde{K} \in \mathbb{R}^{n_u \times p(n_u+n_y)} \,\middle|\, \rho\left(A + B \tilde{K} S^{\star\dagger}\right) < 1 \right\}.
\end{equation*}

We emphasize that $S^{\star \dagger}$ is not full-rank, which implies the existence of a null space $\mathcal{H} = \{ \Gamma \in \mathbb{R}^{n_u \times p(n_u+n_y)} \mid \Gamma  S^{\star \dagger} = 0 \} \neq \emptyset,$ and that $\tilde{K}^\star$ is not unique. In particular, any $\tilde{K} = \tilde{K}^\star + \Gamma$, with $\Gamma \in \mathcal{H}$, is also optimal. 

Importantly, the non-uniqueness of $\tilde{K}^\star$ does not contradict the uniqueness of $K^\star$, which is guaranteed by the fact that $K^\star$ corresponds to the minimal controller realization. The key point is that $(\tilde{K}^{\star} + \Gamma) S^{\star \dagger} = \tilde{K}^{\star} S^{\star \dagger} = K^\star$, which guarantees the uniqueness of the optimal LQG controller $K^\star$.

\subsection{Policy Gradient for the LQG problem (PG-LQG)} \label{subsec:PG-LQG model-base}

To begin with, let us consider the \emph{model-based} PG setting, 
i.e., we have access to $(A,B,C,Q,R,W,V)$. We define the LQG cost and gradient with respect to the lifted controller $\tilde{K} \in \mathcal{K}$, and demonstrate that the LQG cost under the history parameterization satisfies the gradient dominance and smoothness properties that allow PG methods to globally converge. These properties are subsequently leveraged to show that PG-LQG globally converges to the optimal solution under the proposed history representation. 

For any stabilizing controller $\tilde{K} \in \mathcal{K}$ the LQG cost $J(\tilde{K})$ can be expressed as:
\begin{equation}
\label{eq:lqg_cost_tildeK}
J(\tilde{K}) = \operatorname{Tr}\left(P_{\tilde{K}}\,\Sigma_{\nu}\right) + \operatorname{Tr}\left(\tilde{Q}\,(I - L\,C)\right) + \operatorname{Tr}\left(Q\,V\right).
\end{equation}
 Here, $P_{\tilde{K}} \succeq 0$ denotes the solution of the discrete-time closed-loop Lyapunov equation  
$$P_{\tilde{K}} = \Tilde{Q} + S^{\star \dagger\top}\tilde{K}^\top R\tilde{K}S^{\star \dagger} + (A + B\,\tilde{K}S^{\star\dagger})^\top P_{\tilde{K}}(A + B\tilde{K}S^{\star\dagger})$$
and $\Sigma_{\nu} := L\,\Big(C\widetilde{\Sigma}C^\top + V\Big)\,L^\top$. The cost in \eqref{eq:lqg_cost_tildeK} is similar to the one presented in \cite{ziemann_how_2022}, where the difference is the rescaled cost matrix $\tilde{Q} = C^\top Q C$ and the third term that arises by defining the ergodic cost \eqref{eq:cost} w.r.t. $y_t$ instead of $x_t$.

It is worth noting that the second and third terms in \eqref{eq:lqg_cost_tildeK} do not depend on controller $\tilde{K}$ (due to the separation principle of the LQG problem) and thus they have no impact on the gradient of $J(\tilde{K})$ with respect to $\tilde{K}$.

\begin{lemma}\label{lemma:2}
Given $\tilde{K} \in \mathcal{K}$, the gradient of $J(\tilde{K})$ is given by 
\begin{align}
\label{eq: gradient}
\nabla J(\tilde{K}) := 2\, E_{\tilde{K}}\, \Sigma_{\tilde{K}}\, S^{\star \dagger \top},
\end{align}
where $E_{\tilde{K}} := (R + B^{\top} P_K B)\, \tilde{K}\, S^{\star \dagger} - B^{\top} P_{\tilde{K}} A,$ and the covariance $\Sigma_{\tilde{K}} = \texttt{dlyap}(A+B\tilde{K}S^{\star \dagger}, \Sigma_{\nu})$.
\begin{proof}
The proof follows directly from \cite[Lemma~6]{ziemann_how_2022} with $K = \tilde{K} S^{\star\dagger}$ and the gradient of $J$ w.r.t. $K$ given by $\nabla_K J := 2\, E_{\tilde{K}}\, \Sigma_{\tilde{K}}\,$. Hence, by applying the chain rule (through $K$), we obtain $\nabla J(\tilde{K}) = \nabla_K J \, (S^{\star \dagger})^{\top} = 2\, E_{\tilde{K}}\, \Sigma_{\tilde{K}}\, S^{\star \dagger \top}\!.$
\end{proof}
\end{lemma}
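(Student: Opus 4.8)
The idea is to reduce the claim to the known policy-gradient formula for the standard (fully observed) LQR problem, using the fact that the LQG cost in the history representation is, up to a $\tilde{K}$-independent constant, an LQR cost evaluated at the \emph{linearly} reparameterized gain $K := \tilde{K} S^{\star\dagger}$. First I would peel off the constant terms: $\operatorname{Tr}(\tilde{Q}(I-LC))$ and $\operatorname{Tr}(QV)$ in \eqref{eq:lqg_cost_tildeK} depend only on the Kalman gain $L$ and the problem data, so $\nabla J(\tilde{K}) = \nabla_{\tilde{K}}\operatorname{Tr}(P_{\tilde{K}}\Sigma_{\nu})$. Writing $A_{cl} := A + B\tilde{K}S^{\star\dagger} = A + BK$, the Lyapunov equation defining $P_{\tilde{K}}$ reads $P = \tilde{Q} + K^\top R K + A_{cl}^\top P A_{cl}$, so $P_{\tilde{K}}$ is exactly the LQR value matrix of $(A,B)$ with weights $(\tilde{Q},R)$ under feedback $K$, and $\operatorname{Tr}(P_{\tilde{K}}\Sigma_{\nu})$ is the corresponding infinite-horizon LQR cost driven by noise of covariance $\Sigma_{\nu}$ --- precisely the cost analyzed in \cite{ziemann_how_2022}.

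Second, I would invoke the standard LQR gradient identity (see, e.g., \cite[Lemma~6]{ziemann_how_2022} and \cite{fazel_global_2019}): $\nabla_K\operatorname{Tr}(P_K\Sigma_{\nu}) = 2\big((R + B^\top P_K B)K - B^\top P_K A\big)\Sigma_K$, where $\Sigma_K = \texttt{dlyap}(A_{cl},\Sigma_{\nu})$ is the closed-loop covariance. Substituting $K = \tilde{K}S^{\star\dagger}$ turns the bracketed factor into $E_{\tilde{K}} = (R + B^\top P_{\tilde{K}}B)\tilde{K}S^{\star\dagger} - B^\top P_{\tilde{K}}A$ and the covariance into $\Sigma_{\tilde{K}} = \texttt{dlyap}(A + B\tilde{K}S^{\star\dagger},\Sigma_{\nu})$, matching the definitions in the lemma. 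Finally, since $K = \tilde{K}S^{\star\dagger}$ is linear in $\tilde{K}$ with a fixed right factor, the chain rule gives, for every direction $\Delta$, $\langle\nabla_{\tilde{K}}J,\Delta\rangle = \langle\nabla_K J,\Delta S^{\star\dagger}\rangle = \langle\nabla_K J\,S^{\star\dagger\top},\Delta\rangle$, hence $\nabla J(\tilde{K}) = 2E_{\tilde{K}}\Sigma_{\tilde{K}}S^{\star\dagger\top}$, which is \eqref{eq: gradient}. Along the way one checks the routine point that $\tilde{K}\in\mathcal{K}$ maps to a stabilizing $K$, so $P_{\tilde{K}}$ and $\Sigma_{\tilde{K}}$ are well defined and vary smoothly with $\tilde{K}$, legitimizing differentiation under the trace.

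The only real obstacle is bookkeeping rather than anything conceptual: confirming that the additive constants genuinely vanish from the gradient, that the transpose placement in $\texttt{dlyap}$ matches the convention used in the imported LQR formula, and that the chain rule acts on the \emph{right} because the reparameterization post-multiplies $\tilde{K}$ by $S^{\star\dagger}$. If a self-contained derivation is preferred, the same answer follows by differentiating the Lyapunov equation for $P_{\tilde{K}}$ directly: along a direction $\Delta$ the derivative $\dot{P}$ solves a Lyapunov equation with source $M = \dot{A}_{cl}^\top P_{\tilde{K}} A_{cl} + A_{cl}^\top P_{\tilde{K}} \dot{A}_{cl} + \dot{Q}$, where $\dot{A}_{cl} = B\Delta S^{\star\dagger}$ and $\dot{Q} = S^{\star\dagger\top}\Delta^\top R\tilde{K}S^{\star\dagger} + S^{\star\dagger\top}\tilde{K}^\top R\Delta S^{\star\dagger}$; then $\operatorname{Tr}(\dot{P}\Sigma_{\nu}) = \operatorname{Tr}(M\Sigma_{\tilde{K}})$, and regrouping via cyclicity of the trace reproduces $\langle 2E_{\tilde{K}}\Sigma_{\tilde{K}}S^{\star\dagger\top},\Delta\rangle$.
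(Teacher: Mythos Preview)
Your proposal is correct and follows essentially the same approach as the paper: cite the known LQR gradient formula from \cite[Lemma~6]{ziemann_how_2022} evaluated at $K=\tilde{K}S^{\star\dagger}$, then apply the chain rule through the linear right-multiplication to pick up the trailing $S^{\star\dagger\top}$. Your write-up is simply more detailed---you explicitly strip the $\tilde{K}$-independent terms, verify the inner-product form of the chain rule, and sketch the alternative direct Lyapunov-differentiation route---but the core argument is identical.
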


We note that the expression of the gradient in Lemma~\ref{lemma:2} is analogous to the gradient of the LQR cost in \cite{fazel_global_2019}, except by the presence of $S^{\star \dagger \top}$ right multiplying the state covariance. Moreover, the cost function $J(\tilde{K})$ in \eqref{eq:lqg_cost_tildeK} consists of three terms, of which only the first depends on $\tilde{K}$ (as in the LQR problem). Therefore, it is natural to conclude that, under the history representation, the LQG cost with respect to $\tilde{K}$ is gradient dominated. We formalize this claim Lemma \ref{lemma:gradient dominance}. First, we define the sublevel set of stabilizing controllers.

\begin{definition} Let $\alpha > 0$ be a positive scalar and fix a stabilizing controller $\tilde{K}^\prime \in \mathcal{K}$. The sublevel set of stabilizing controllers $\mathcal{K}_{\tilde{K}^\prime,\alpha} \subseteq \mathcal{K}$ is denoted by
\begin{equation}
\label{eq:sublevel_set}
\mathcal{K}_{\tilde{K}^\prime,\alpha} := \left\{ \tilde{K} \,\middle|\, J(\tilde{K}) - J(\tilde{K}^\star) \leq \alpha \left( J(\tilde{K}^\prime) - J(\tilde{K}^\star) \right) \right\}.
\end{equation} 
\end{definition}

Later, with a slight abuse of notation we denote $\mathcal{K}_\alpha$ as the sublevel set of stabilizing controllers with respect to some initial stabilizing controller $\tilde{K}^\prime = \tilde{K}_0 \in \mathcal{K}$.

\begin{lemma}[Gradient Dominance]
\label{lemma:gradient dominance}
Given a stabilizing controller $\tilde{K} \in \mathcal{K}_{\alpha}$, it holds that
\begin{equation}
\label{eq:gradient_domination}
\hspace{-0.1cm}J(\tilde{K}) \hspace{-0.02cm}-\hspace{-0.02cm} J(\tilde{K}^\star) \hspace{-0.05cm}\le\hspace{-0.05cm} \frac{\|\Sigma_{\tilde{K}^\star}\|\hspace{-0.02cm}\|S^\star\|^2}{4\,\underline{\sigma}(R)\,\underline{\sigma}^2(\Sigma_{\tilde{K}})}  \|\nabla J(\tilde{K})\|_F^2\hspace{-0.05cm}=:\hspace{-0.05cm} \mu_{\texttt{PL}}\|\nabla J(\tilde{K})\|_F^2,
\end{equation}
where $\Sigma_{\tilde{K}^\star}$ is the state covariance matrix under the optimal controller $\tilde{K}^\star$.
\end{lemma}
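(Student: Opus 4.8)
The plan is to reduce \eqref{eq:gradient_domination} to the classical LQR gradient–domination inequality of \cite{fazel_global_2019,ziemann_how_2022}, keeping track of the two features of the lifted formulation: the cost matrix $\tilde Q$ together with the covariance $\Sigma_\nu$ playing the role of an initial–state covariance, and the factor $S^{\star\dagger\top}$ that right–multiplies the state covariance in $\nabla J(\tilde K)$ (Lemma~\ref{lemma:2}). First I would observe that in $J(\tilde K)-J(\tilde K^\star)$ the last two terms of \eqref{eq:lqg_cost_tildeK} cancel, since they do not depend on the controller, so $J(\tilde K)-J(\tilde K^\star)=\operatorname{Tr}\big((P_{\tilde K}-P_{\tilde K^\star})\Sigma_\nu\big)$. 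With $K:=\tilde K S^{\star\dagger}$, the matrices $P_{\tilde K}$ and $\Sigma_{\tilde K}$ depend on $\tilde K$ only through $K$, while $K^\star=\tilde K^\star S^{\star\dagger}=-(R+B^\top P B)^{-1}B^\top P A$ with $P=\texttt{dare}(A,B,\tilde Q,R)$ is exactly the LQR–optimal gain for $(A,B)$ with weights $(\tilde Q,R)$; moreover every stabilizing $K$ is realized as $\tilde K S^{\star\dagger}$ by taking $\tilde K=K S^\star$ (recall $S^\star S^{\star\dagger}=I$), so $J(\tilde K^\star)$ is the global minimum of $J$ over $\mathcal K$. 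Hence the right–hand side above is precisely the LQR cost gap for that instance with initial covariance $\Sigma_\nu$.

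Next I would apply the LQR cost–difference identity to this instance: writing $\Delta:=K^\star-K$ and $M_{\tilde K}:=R+B^\top P_{\tilde K}B\succeq R$, the identity expresses $J(\tilde K)-J(\tilde K^\star)$ as $\operatorname{Tr}\big(\Sigma_{\tilde K^\star}E_{\tilde K}^\top M_{\tilde K}^{-1}E_{\tilde K}\big)$ minus the nonnegative quantity $\operatorname{Tr}\big(\Sigma_{\tilde K^\star}(\Delta+M_{\tilde K}^{-1}E_{\tilde K})^\top M_{\tilde K}(\Delta+M_{\tilde K}^{-1}E_{\tilde K})\big)$ after completing the square, where $E_{\tilde K}$ is exactly the matrix of Lemma~\ref{lemma:2}. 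Discarding the nonnegative term and using $M_{\tilde K}^{-1}\preceq R^{-1}$ together with $\operatorname{Tr}(\Sigma X)\le\|\Sigma\|\operatorname{Tr}(X)$ for $\Sigma,X\succeq 0$ yields
\begin{equation*}
J(\tilde K)-J(\tilde K^\star)\;\le\;\frac{\|\Sigma_{\tilde K^\star}\|}{\underline{\sigma}(R)}\,\operatorname{Tr}(E_{\tilde K}^\top E_{\tilde K}).
\end{equation*}

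It then remains to lower–bound $\|\nabla J(\tilde K)\|_F^2$ by $\operatorname{Tr}(E_{\tilde K}^\top E_{\tilde K})$. Using $\nabla J(\tilde K)=2E_{\tilde K}\Sigma_{\tilde K}S^{\star\dagger\top}$ from Lemma~\ref{lemma:2} and the identity $S^{\star\dagger\top}S^{\star\dagger}=(S^\star S^{\star\top})^{-1}\succeq\|S^\star\|^{-2}I$ — which is where full row–rank of $S^\star$, established in the previous section, enters — one has
\begin{align*}
\|\nabla J(\tilde K)\|_F^2 &= 4\operatorname{Tr}\big(E_{\tilde K}\Sigma_{\tilde K}(S^\star S^{\star\top})^{-1}\Sigma_{\tilde K}E_{\tilde K}^\top\big)\\
&\ge \frac{4}{\|S^\star\|^2}\|E_{\tilde K}\Sigma_{\tilde K}\|_F^2 \;\ge\; \frac{4\,\underline{\sigma}^2(\Sigma_{\tilde K})}{\|S^\star\|^2}\operatorname{Tr}(E_{\tilde K}^\top E_{\tilde K}),
\end{align*}
using $\Sigma_{\tilde K}\succeq\underline{\sigma}(\Sigma_{\tilde K})I$ with $\underline{\sigma}(\Sigma_{\tilde K})>0$ for stabilizing $\tilde K$. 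Rearranging gives $\operatorname{Tr}(E_{\tilde K}^\top E_{\tilde K})\le\|S^\star\|^2\|\nabla J(\tilde K)\|_F^2/(4\underline{\sigma}^2(\Sigma_{\tilde K}))$, and substituting this into the previous display yields \eqref{eq:gradient_domination} with $\mu_{\texttt{PL}}=\|\Sigma_{\tilde K^\star}\|\|S^\star\|^2/(4\underline{\sigma}(R)\underline{\sigma}^2(\Sigma_{\tilde K}))$.

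The main obstacle is the reduction of the first two paragraphs: one must verify that, despite the non-uniqueness of $\tilde K^\star$, the lifted cost gap is governed by the very same $E_{\tilde K}$ as the underlying LQR problem and that $J(\tilde K^\star)$ is the true optimum — both following from $S^\star$ being full row–rank and $K^\star$ being the DARE gain. Once that is in place the remaining steps are routine trace/operator inequalities; the only genuinely new algebraic ingredient is the clean inversion $S^{\star\dagger\top}S^{\star\dagger}=(S^\star S^{\star\top})^{-1}$, which absorbs the lifting factor and produces the $\|S^\star\|^2$ term in $\mu_{\texttt{PL}}$. A secondary point worth checking is the strict positivity $\underline{\sigma}(\Sigma_{\tilde K})>0$, which is needed for the bound to be finite and follows from a reachability–type argument for $\Sigma_\nu$ through the closed loop under Assumption~1.
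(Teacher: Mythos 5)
Your proposal is correct and follows essentially the same route as the paper: both reduce to the LQR completion-of-squares bound $J(\tilde K)-J(\tilde K^\star)\le\|\Sigma_{\tilde K^\star}\|\operatorname{Tr}(E_{\tilde K}^\top E_{\tilde K})/\underline{\sigma}(R)$ (the paper phrases this via the advantage function of Fazel et al., you via the equivalent cost-difference identity) and then absorb the lifting through the full row rank of $S^\star$. Your final step, lower-bounding $\|\nabla J(\tilde K)\|_F^2$ via $S^{\star\dagger\top}S^{\star\dagger}=(S^\star S^{\star\top})^{-1}\succeq\|S^\star\|^{-2}I$, is a clean equivalent of the paper's inversion $\nabla J(\tilde K)S^{\star\top}=2E_{\tilde K}\Sigma_{\tilde K}$ and, unlike the paper's intermediate display \eqref{eq:trace - PL}, correctly carries the factor $1/4$ appearing in $\mu_{\texttt{PL}}$.
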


\begin{proof} The proof follows  from \cite[Lemma 11]{fazel_global_2019} by using our definition for LQG controller $K = \tilde{K}S^{\star \dagger}$. For the completeness we begin the proof by writing the cost different between $\tilde{K}$ and $K^\star$ as follows:  
\[
J(\tilde K)-J(\tilde K^\star)
\;=\;
-\,\mathbb{E}\sum_{t=0}^{\infty}A_{\tilde K}\bigl(x_t^\star,u_t^\star\bigr)
\]
where $A_{\tilde{K}}(x_t^\star,u_t^\star)$ denotes the advantage function\footnote{$A_{\tilde K}\bigl(x_t^\star,u_t^\star\bigr)$ is defined as in \cite[Section C.1]{fazel_global_2019}.} (i.e., the difference between the state-action value function and the value function) of playing the controller $\tilde{K}$ given the sequence of state and inputs $\{x^\star_t, u^\star_t\}_{t\geq 0}$ generated by the optimal lifted controller $\tilde{K}^\star$. By the quadratic form of the advantage function, it is routine to obtain the following lower bound 
$$
\,A_{\tilde K}(x^\star_t,u^\star_t)
\geq
-\Tr \Bigl(x^\star_t\,x^{\star\top}_t\,
E_{\tilde K}^\top\,(R + B^\top P_{\tilde K}B)^{-1}\,E_{\tilde K}\Bigr).
$$
Taking the expectation and summing over time, we have
$$
J(\tilde K)-J(\tilde K^\star)
\le
\Tr\Bigl(\Sigma_{\tilde K^\star}\,
E_{\tilde K}^\top\,(R + B^\top P_{\tilde K}B)^{-1}\,E_{\tilde K}\Bigr).
$$
Using the inequality $\Tr(M_1 M_2)\le\|M_1\|\,\Tr(M_2)$ for any matrices $M_1$ and $M_2$ of compatible dimensions, with $M_2$ being positive semidefinite, and with  \(\|(R+B^\top P_{\tilde K}B)^{-1}\|\le1/\underline\sigma(R)\), we obtain
\begin{align} \label{eq:cost different - PL}
 J(\tilde K)-J(\tilde K^\star)
\;\le\;
\frac{\|\Sigma_{\tilde K^\star}\|}{\underline\sigma(R)}
\Tr \bigl(E_{\tilde K}^\top E_{\tilde K}\bigr). 
\end{align}
Finally, by Lemma \ref{lemma:2} we have  
\(\nabla J(\tilde K)=2\,E_{\tilde K}\,\Sigma_{\tilde K}\,S^{\star \dagger \top}\), which implies 
\[
\Tr\bigl(E_{\tilde K}^\top E_{\tilde K}\bigr)
=
\Tr\Bigl(S^\star \Sigma_{\tilde K}^{-1}\,
\nabla J(\tilde K)^\top\,\nabla J(\tilde K)\,\Sigma_{\tilde K}^{-1}S^{\star \top}\Bigr).
\]
Using 
\(\|\Sigma_{\tilde K}^{-1}\|\le1/\underline\sigma(\Sigma_{\tilde K})\), we have 
\begin{align}\label{eq:trace - PL}
\Tr \bigl(E_{\tilde K}^\top E_{\tilde K}\bigr)
\le
\frac{\|S^\star\|^2}{\underline\sigma^2(\Sigma_{\tilde K})}
\|\nabla J(\tilde K)\|_F^2,    
\end{align}
which combined with \eqref{eq:cost different - PL} yields exactly
\[
J(\tilde K)-J(\tilde K^\star)
\;\le\;
\frac{\|\Sigma_{\tilde K^\star}\|\,\|S^\star\|^2}
     {4\,\underline\sigma(R)\,\underline\sigma^2(\Sigma_{\tilde K})}
\;\|\nabla J(\tilde K)\|_F^2.
\]
\end{proof}
We now state that the gradient of the LQG cost with respect to $\tilde{K}$ satisfies the Lipschitz property.

\begin{lemma}[Smoothness of the LQG Gradient]
\label{lemma:smoothness}
Given two stabilizing controllers $\tilde{K},\tilde{K}^\prime \in \mathcal{K}_{\alpha}$, it holds that 
\begin{align}\label{eq: gradient smoothness}
  \bigl\|\nabla J(\tilde K) - \nabla J(\tilde K^\prime)\bigr\|_F
\;\le\;
L_g \,\bigl\|\tilde K - \tilde K^\prime \bigr\|_F,  
\end{align}
where $L_g>0$ is the  Lipschitz constant. 
\end{lemma}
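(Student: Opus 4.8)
The plan is to transfer the question to the ordinary controller variable $K := \tilde K S^{\star\dagger}$, for which the Lipschitz bound is essentially the one in \cite[Appendix~D]{fazel_global_2019}, and then pay a change-of-variables factor. Write $P_K := P_{\tilde K}$, $\Sigma_K := \Sigma_{\tilde K}$, $A_K := A+BK$. Since $\tilde K \mapsto K$ is linear and, by Lemma~\ref{lemma:2}, $\nabla J(\tilde K) = \nabla_K J(K)\,S^{\star\dagger\top}$ with $\nabla_K J(K) = 2\,E_{\tilde K}\,\Sigma_K$, we have
\[
\|\nabla J(\tilde K) - \nabla J(\tilde K')\|_F \le \|S^{\star\dagger}\|\,\|\nabla_K J(K) - \nabla_K J(K')\|_F, \qquad \|K - K'\|_F \le \|S^{\star\dagger}\|\,\|\tilde K - \tilde K'\|_F .
\]
Hence it suffices to show $\nabla_K J$ is $L_K$-Lipschitz on the image set $\mathcal S_\alpha := \{\,\tilde K S^{\star\dagger} : \tilde K \in \mathcal K_\alpha\,\}$ and then take $L_g = \|S^{\star\dagger}\|^2 L_K = L_K/\underline{\sigma}^2(S^\star)$. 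The key structural point is that, although $\mathcal K_\alpha$ is \emph{unbounded} (the null space $\mathcal H$ makes $\tilde K^\star$ non-unique), its image $\mathcal S_\alpha$ is a sublevel set of the LQR-type cost $K \mapsto \operatorname{Tr}(P_K\Sigma_\nu) + \text{const}$, hence a compact subset of the open set of stabilizing gains.

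On $\mathcal S_\alpha$ I would first establish, as in \cite[Appendix~D]{fazel_global_2019} and \cite{mohammadi2020linear}, uniform bounds $\|P_K\|\le b_P$, $\|\Sigma_K\|\le b_\Sigma$, $\|K\|\le b_K$, $\|E_{\tilde K}\|\le b_E$, and a uniform contraction rate for $A_K$, all depending only on $\alpha$, $\tilde K_0$ and the problem data; these follow from $J(\tilde K)-J(\tilde K^\star)\le \alpha\,(J(\tilde K_0)-J(\tilde K^\star))$ together with the series forms $P_K = \sum_{t\ge0}(A_K^\top)^t(\tilde Q + K^\top R K)A_K^t$ and $\Sigma_K = \sum_{t\ge0}A_K^t\Sigma_\nu(A_K^\top)^t$. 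Next, for $K,K'$ close enough that the segment joining them lies in a slightly enlarged compact sublevel set $\mathcal S_{2\alpha}$, first-order perturbation analysis of the discrete Lyapunov operator (which is real-analytic on the stabilizing set, with derivative bounded on $\mathcal S_{2\alpha}$) yields $\|P_K - P_{K'}\| \le c_P\|K-K'\|$ and $\|\Sigma_K - \Sigma_{K'}\| \le c_\Sigma\|K-K'\|$, with $c_P,c_\Sigma$ functions of the above bounds. Combining via the product rule,
\[
\nabla_K J(K) - \nabla_K J(K') = 2(E_{\tilde K}-E_{\tilde K'})\Sigma_K + 2E_{\tilde K'}(\Sigma_K - \Sigma_{K'}),
\]
\[
E_{\tilde K}-E_{\tilde K'} = (R+B^\top P_K B)(K-K') + B^\top(P_K-P_{K'})(BK'-A),
\]
so each term is at most a constant times $\|K-K'\|$, giving a \emph{local} Lipschitz estimate. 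To extend it to all pairs in $\mathcal S_\alpha$, bound far-apart pairs trivially by $\|\nabla_K J(K)-\nabla_K J(K')\| \le 2\sup_{\mathcal S_\alpha}\|\nabla_K J\| \le (2b_E b_\Sigma/\delta)\|K-K'\|$, where $\delta>0$ is the segment-length threshold from the perturbation step, and take $L_K$ to be the larger of the two constants; the explicit $L_g$ is then read off from $b_P,b_\Sigma,b_K,b_E,\delta$ and $\|S^{\star\dagger}\|$.

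The main obstacle is the first step: verifying that $\mathcal S_\alpha$ is genuinely compact and that $A_K$ admits a uniform stability margin on it. In the LQG reformulation the noise matrix $\Sigma_\nu = L(C\widetilde\Sigma C^\top + V)L^\top$ and the cost matrix $\tilde Q = C^\top Q C$ may both be rank-deficient, so the standard coercivity argument for $K \mapsto \operatorname{Tr}(P_K\Sigma_\nu)$ and the sandwich $\Sigma_\nu \preceq \Sigma_K \preceq b_\Sigma I$ (which is what delivers a margin for $A_K$) requires a nondegeneracy hypothesis such as $\Sigma_\nu \succ 0$ — analogous to what is already needed for $\underline{\sigma}(\Sigma_{\tilde K})$ to be bounded away from zero in Lemma~\ref{lemma:gradient dominance} — or an argument that the lifting itself restores full rank. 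Once this is secured, Steps on perturbation and the product rule are routine bookkeeping.
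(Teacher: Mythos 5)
Your proposal is correct and follows essentially the same route as the paper, which simply cites \cite[Lemma C.3]{gravell2020learning} (the LQR gradient-perturbation bounds) under the substitution $K = \tilde K S^{\star\dagger}$ and absorbs the change-of-variables factor $\|S^{\star\dagger}\|^2$ into $L_g$; your write-up supplies the compactness, uniform-bound, and local-to-global details that the citation leaves implicit. Your caveat that $\Sigma_\nu \succ 0$ (or reachability of $(A+BK,\Sigma_\nu^{1/2})$) is needed for coercivity of the sublevel set and a uniform stability margin is well taken --- the same nondegeneracy is already implicitly required for $\underline{\sigma}(\Sigma_{\tilde K})>0$ in Lemma~\ref{lemma:gradient dominance}, and the paper does not state it explicitly.
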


The proof of Lemma \ref{lemma:smoothness} is routine and follows directly from \cite[Lemma C.3]{gravell2020learning} under the corresponding modification of the controller $K = \tilde{K}S^\star$. Moreover, the Lipschitz constant $L_g$ is polynomial on the problem parameters, i.e.,  $L_g = \texttt{poly}(\|S^\star\|,\|A\|,\|B\|,\|\tilde{K}\|,\|P_{\tilde{K}}\|)$.

Lemmata~\ref{lemma:gradient dominance} and~\ref{lemma:smoothness} establish that the LQG cost, under the history representation, satisfies both gradient dominance and smoothness conditions. Therefore, given the history representation $S^\star$ (which depends on the system matrices $(A,B,C)$, cost matrices $(Q,R)$ and noise matrices $(W,V)$), policy gradient-based methods may globally converge to the optimal LQG controller $K^\star$. This result is significant, as the standard state-space representation of the LQG problem, as previously discussed, generally does not satisfy that the gradient dominance condition \cite{mohammadi_lack_2021}. Unfortunately, we often do not have access to the system and noise matrices and thus $S^\star$ is not available. We elaborate on how to learn $S^\star$ from the data in Section \ref{sec: model-free}. For now, let us stick to model-based PG setting (i.e., with $S^\star$ known) with the goal of characterizing the convergence of the following controller updating rule: 
\begin{equation}
\label{eq:gd_update}
\tilde{K}_{n+1} = \tilde{K}_{n} - \eta\nabla J\big(\tilde{K}_{n}\big), \quad \text{for } n = 0,1,2,\ldots,
\end{equation}
where $\eta >0$ is the step-size.

\begin{assumption}\label{assumption:initial controller} The initial lifted controller $\tilde{K}_0$ is stabilizing.
\end{assumption}

Assumption~\ref{assumption:initial controller} is standard in policy gradient methods for the LQR problem \cite{fazel_global_2019, mohammadi2020linear, gravell2020learning, toso2024oracle}, and the same requirement naturally extends to the LQG setting. In particular, if the initial policy $u_t = \tilde{K}_0 z_{t,p}$ does not stabilize the system \eqref{eq:sys}, then the policy gradient $\nabla J(\tilde{K}_n)$ is not well-defined. In such a case, no positive step size $\eta$ ensures that the updated policy $\tilde{K}_{n+1}$ remains within the stabilizing set $\mathcal{K}_{\alpha}$. We also emphasize a recent line of work \cite{lamperski2020computing, perdomo2021stabilizing, zhao2024convergence, toso2025learning} that addresses the problem of computing an initial stabilizing controller via a sequence of discounted LQR problems. In Section~\ref{sec:numerics}, we demonstrate that a similar approach can be adapted to the LQG setting for learning $\tilde{K}_0$ from trajectory data.

\begin{lemma}
\label{lemma:convergence}
Given $\tilde{K}_0 \in \mathcal{K}$, and suppose the step-size is selected according to $\eta \leq \frac{1}{L_g}$. Then, the controller update \eqref{eq:gd_update} satisfies
$$
J(\tilde{K}_{n+1}) - J(\tilde{K}^\star)\hspace{-0.075cm} \leq \hspace{-0.05cm}\left(1\hspace{-0.075cm} - \hspace{-0.075cm}\frac{2 \eta \underline{\sigma}^2(\Sigma_{\tilde{K}_n}) \underline{\sigma}(R)}{\|\Sigma_{\tilde{K}^\star}\| \|S^\star\|^2} \right)\hspace{-0.05cm}\left(\hspace{-0.05cm} J(\tilde{K}_n) \hspace{-0.05cm}- \hspace{-0.05cm}J(\tilde{K}^\star)\hspace{-0.05cm}\right)\hspace{-0.075cm}.
$$
\begin{proof}
The proof is routine and follows directly by combining Lemmas~\ref{lemma:smoothness} and \ref{lemma:gradient dominance}. We provide a detailed derivation of the analogous result for the model-free setting in Section \ref{sec: model-free}.
\end{proof}
\end{lemma}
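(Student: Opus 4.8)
The plan is to derive the per-iteration contraction by chaining the smoothness inequality (Lemma~\ref{lemma:smoothness}) with the gradient-dominance inequality (Lemma~\ref{lemma:gradient dominance}), exactly as in the classical gradient-descent analysis for Polyak--\L ojasiewicz functions. First I would invoke approximate smoothness along the segment connecting $\tilde{K}_n$ and $\tilde{K}_{n+1} = \tilde{K}_n - \eta\nabla J(\tilde{K}_n)$ to obtain the standard descent estimate
\[
J(\tilde{K}_{n+1}) \;\le\; J(\tilde{K}_n) - \eta\Bigl(1 - \tfrac{\eta L_g}{2}\Bigr)\|\nabla J(\tilde{K}_n)\|_F^2 \;\le\; J(\tilde{K}_n) - \tfrac{\eta}{2}\|\nabla J(\tilde{K}_n)\|_F^2,
\]
where the last inequality uses $\eta \le 1/L_g$. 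Subtracting $J(\tilde{K}^\star)$ from both sides gives $J(\tilde{K}_{n+1}) - J(\tilde{K}^\star) \le \bigl(J(\tilde{K}_n) - J(\tilde{K}^\star)\bigr) - \tfrac{\eta}{2}\|\nabla J(\tilde{K}_n)\|_F^2$.

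Next I would apply Lemma~\ref{lemma:gradient dominance} in the rearranged form $\|\nabla J(\tilde{K}_n)\|_F^2 \ge \mu_{\texttt{PL}}^{-1}\bigl(J(\tilde{K}_n) - J(\tilde{K}^\star)\bigr)$ with $\mu_{\texttt{PL}} = \tfrac{\|\Sigma_{\tilde{K}^\star}\|\,\|S^\star\|^2}{4\,\underline{\sigma}(R)\,\underline{\sigma}^2(\Sigma_{\tilde{K}_n})}$, which substitutes $\tilde{K}=\tilde{K}_n$ into the bound. Plugging this lower bound on the squared gradient norm into the descent estimate yields
\[
J(\tilde{K}_{n+1}) - J(\tilde{K}^\star) \;\le\; \Bigl(1 - \tfrac{\eta}{2\mu_{\texttt{PL}}}\Bigr)\bigl(J(\tilde{K}_n) - J(\tilde{K}^\star)\bigr) \;=\; \Bigl(1 - \tfrac{2\eta\,\underline{\sigma}^2(\Sigma_{\tilde{K}_n})\,\underline{\sigma}(R)}{\|\Sigma_{\tilde{K}^\star}\|\,\|S^\star\|^2}\Bigr)\bigl(J(\tilde{K}_n) - J(\tilde{K}^\star)\bigr),
\]
which is the claimed bound. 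A small bookkeeping point is that $\mu_{\texttt{PL}}$ carries the iterate-dependent quantity $\underline{\sigma}(\Sigma_{\tilde{K}_n})$, so the contraction factor is genuinely $n$-dependent; this is fine for the statement as written, but to turn it into a clean linear rate one would further need a uniform lower bound $\underline{\sigma}(\Sigma_{\tilde{K}_n}) \ge \underline{\sigma}(\Sigma_\nu) > 0$ (which holds since $\Sigma_{\tilde{K}_n} \succeq \Sigma_\nu$ from the Lyapunov equation) and an upper bound on $L_g$ uniform over the sublevel set.

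The main obstacle — and the reason the lemma defers the full derivation to the model-free section — is verifying that the iterates remain in the sublevel set $\mathcal{K}_\alpha$, since both Lemma~\ref{lemma:gradient dominance} and Lemma~\ref{lemma:smoothness} are stated only for controllers in $\mathcal{K}_\alpha$, and in particular the smoothness constant $L_g$ is only valid there. This requires an inductive argument: assuming $\tilde{K}_n \in \mathcal{K}_\alpha$, the descent inequality $J(\tilde{K}_{n+1}) \le J(\tilde{K}_n)$ (before even invoking gradient dominance) shows the cost does not increase, hence $\tilde{K}_{n+1}$ still satisfies the defining inequality of $\mathcal{K}_\alpha$ and in particular is stabilizing so that $\nabla J(\tilde{K}_{n+1})$ is well-defined. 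One must also ensure the whole segment $\{(1-s)\tilde{K}_n + s\tilde{K}_{n+1} : s\in[0,1]\}$ lies in a region where the smoothness estimate applies; this is handled exactly as in \cite{fazel_global_2019, gravell2020learning} by a continuity/compactness argument on the sublevel set. Everything else is the routine PL-to-linear-rate computation, so I would present the descent step, the substitution, and the induction on sublevel-set membership, and otherwise point to the detailed model-free derivation in Section~\ref{sec: model-free}.
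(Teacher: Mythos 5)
Your proposal is correct and matches the paper's intended argument exactly: the paper's proof is the one-line remark that the result ``follows directly by combining Lemmas~\ref{lemma:smoothness} and \ref{lemma:gradient dominance},'' and your descent-plus-PL chain with $\eta \le 1/L_g$ reproduces the stated contraction factor $1 - \tfrac{2\eta\,\underline{\sigma}^2(\Sigma_{\tilde{K}_n})\,\underline{\sigma}(R)}{\|\Sigma_{\tilde{K}^\star}\|\,\|S^\star\|^2}$ precisely, mirroring the detailed model-free derivation in Section~\ref{sec: model-free} specialized to exact gradients. Your added bookkeeping on sublevel-set membership and the iterate-dependent contraction rate is a correct and worthwhile observation that the paper leaves implicit.
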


\noindent\textbf{Discussion:} Lemma~\ref{lemma:convergence} establishes that the update rule in \eqref{eq:gd_update} globally converges to the optimal lifted controller $\tilde{K}^\star$. Given access to the lifting matrix $S^\star$, the optimal LQG controller can be recovered as $K^\star = \tilde{K}^\star S^{\star \dagger}$. This result shows that, under the history parameterization, policy gradient methods globally converge to the optimal LQG controller.  Moreover, suppose we unroll the controller update \eqref{eq:gd_update} over $N$ iterations, and let $\epsilon > 0$ be a small tolerance. Then, Lemma~\ref{lemma:convergence} implies that $J(\tilde{K}_{N}) - J(\tilde{K}^\star) \leq \epsilon$ after at most
$$N \geq \frac{L_g\|\Sigma_{\tilde{K}^\star}\| \|S^\star\|^2}{2\underline{\sigma}(R)\min_{\tilde{K} \in \mathcal{K}} \underline{\sigma}^2(\Sigma_{\tilde{K}})}\log\left(\frac{J(\tilde{K}_0) - J(\tilde{K}^\star)}{\epsilon}\right),$$
iterations. Notably, the convergence rate of PG-LQG depends critically on the history representation matrix $S^\star$. As the history length parameter $p$ increases, the spectral norm $\|S^\star\|$ decreases as $S^\star$ depends polynomially on $\tilde{A}^p$, which in turn reduces the number of iterations to reach the desired optimality gap $\epsilon$. The key takeaway is that longer observation histories accelerate convergence: the more of the system's history is encoded in the policy, the faster PG converges to the optimal policy. 

\begin{figure}
    \centering
    \includegraphics[width=1\linewidth]{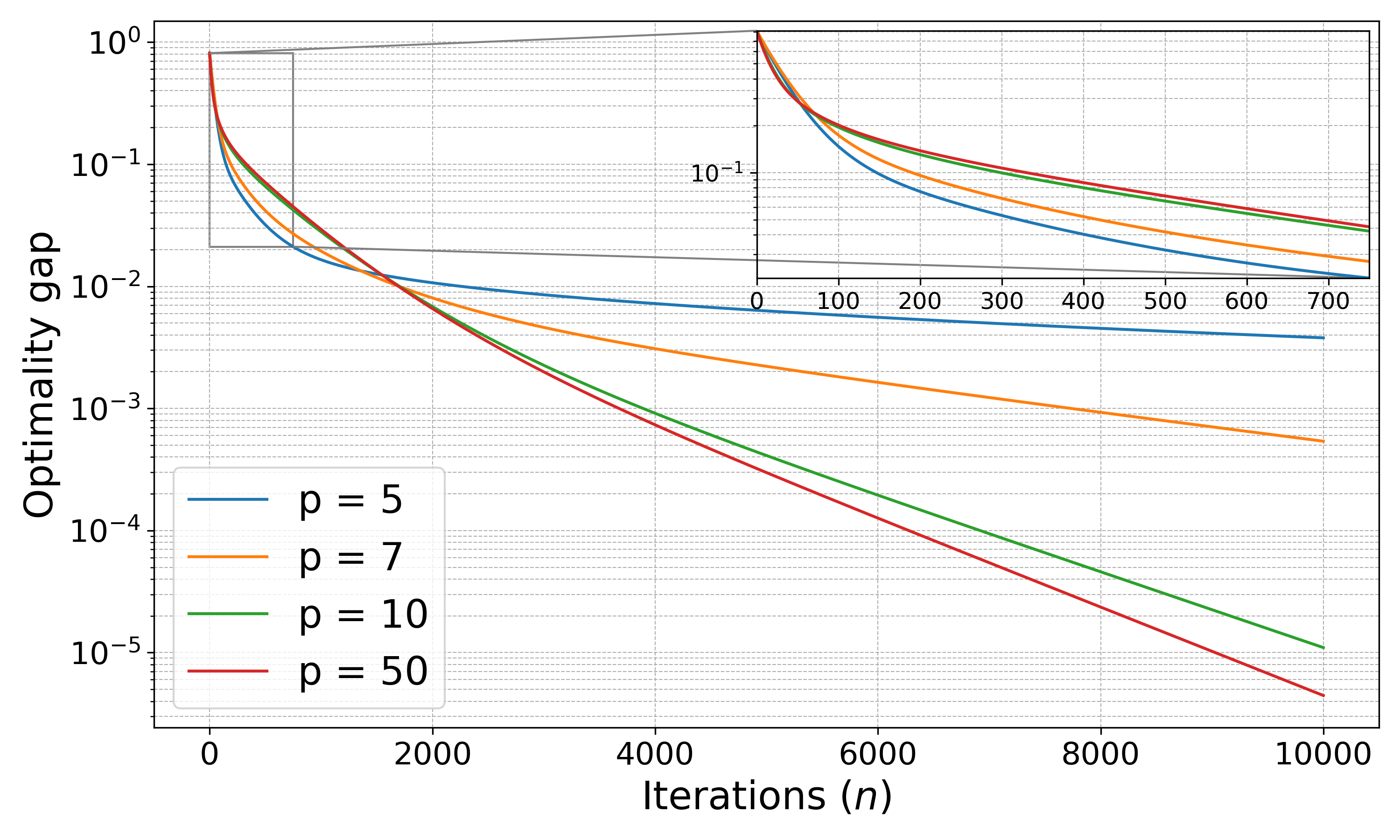}
    \caption{Optimality gap of model-based PG-LQG with respect to the number of iterations for varying history length $p$.}
    \label{fig:different P}
\end{figure}

Figure~\ref{fig:different P} illustrates the impact of the history length parameter $p$ on the convergence of PG-LQG. When the number of iterations $N$ is sufficiently large such that $\tilde{K}_N \approx \tilde{K}^\star$, the convergence rate is governed by $\mathcal{O}(\|S^\star\|^2 \log(1/\epsilon))$ to reach a given accuracy level $\epsilon$. As $p$ increases (i.e., capturing a longer input-output history) the spectral norm $S^\star$ decreases. This reduction directly translates into fewer iterations required to achieve the same optimality gap, which accelerates convergence. In particular, leveraging longer system histories yields a more favorable representation and improves the convergence of PG.

\section{Model-free analysis} \label{sec: model-free}

We are now ready to address the setting where the system and noise matrices $(A,B,C, V, W)$ are unavailable. As a result, the history representation $S^\star$ cannot be directly computed, and the policy gradient $\nabla J(\tilde{K})$ (c.f., update~\eqref{eq:gd_update}) must be estimated without access to model parameters. To estimate the policy gradient $\nabla J(\tilde{K})$, we assume we have access to a simulation of~\eqref{LQG} and adopt the standard zeroth-order gradient estimator \cite{malik_derivative-free_2020,nesterov2017random}. This estimator arises from a finite sample approximation of the gradient using random directions. That is, given a lifted controller $\tilde{K}$, this controller is perturbed as $\tilde{K} + U_i$, for some perturbation matrix $U_i$ drawn uniformly over matrices with $\|U_i\|_F = r$, where $r>0$ is the smoothing radius. The estimation of $\nabla J(\tilde{K})$ denoted by $\widehat{\nabla}J(\tilde{K})$ is
$$
\texttt{ZO}(\tilde{K},n_s,r):= \widehat{\nabla}J(\tilde{K}) := \frac{pn_u(n_u+n_y)}{n_s}\sum_{i=1}^{n_s}\frac{J(\tilde{K} + U_i)\,U_i}{r^2},
$$
where $n_s$ is the number of samples. Note that, to estimate the infinite-horizon cost $J(\tilde{K}+U_i)$, it suffices to run the system \eqref{LQG} with $u_t = (\tilde{K}+U_i)z_{t,p}$ and observe the outputs $y_t$ over a sufficiently long time horizon $T$. With the collected data $\{(u_t, y_t)\}_{t=0}^{T-1}$ and the cost matrices $(Q, R)$ in hand, we estimate $J(\tilde{K}+U_i)$ via a finite-horizon approximation of  \eqref{eq:cost}. 

The catch: we do not have access to $z_{t,p}$, i.e., the history vector, for $t \leq p$. To address this, we assume a warm-up phase, during which a predefined sequence of control inputs $u_{\texttt{warm-up}} := \{u_t\}_{t=0}^{p}$ of length $p$ is applied to the system. Executing \eqref{LQG} with this input and recording the corresponding outputs $y_{\texttt{warm-up}} = \{y_t\}_{t=0}^{p}$ enables construction of the history vector $z_{t,p}$ for $t \leq p$. We emphasize that, if $\tilde{K}$ is stabilizing and $T$ is sufficiently larger than $p$, we can choose $u_{\texttt{warm-up}} = 0$ as the warm-up sequence. In this case, the impact of any instability in $z_{t,p}$ during the warm-up phase diminishes over time when the stabilizing controller $u_t = \tilde{K} z_{t,p}$ is applied for the remainder of the horizon.

Once the warm-up phase is complete and the history vector $z_{t,p}$ is available for $t \leq p$, we proceed to collect the trajectory data $\{(u_t, y_t)\}_{t=0}^{T-1}$ by applying the control input $u_t = (\tilde{K} + U_i)z_{t,p}$ for each $i \in [n_s]$ samples. The collected data is then used to estimate the policy gradient with the zeroth-order method, which is subsequently employed to update the controller. This model-free policy gradient update steps are summarized in Algorithm~\ref{alg:model-free}.

\begin{algorithm}
\caption{Model-Free PG-LQG}
\label{alg:model-free}
\KwIn{$\tilde{K}_{0} \text{ (see Section \ref{subsec: learning K0})}$,  $\eta$, $r$, $n_s$, and  $N$}

\For{$n = 0$ \KwTo $N-1$}{
    Estimate gradient using zeroth-order estimator:
    \[
    \widehat{\nabla} J(\tilde{K}_{n}) \gets \texttt{ZO}(\tilde{K}_{n}, n_s, r)\;\ 
    \]
    
    Perform policy gradient update on the controller:
    \[
    {\tilde{K}}_{n+1} = \tilde{K}_{n} - \eta\, \widehat{\nabla} J(\tilde{K}_{n})
    \]
    
}
\Return $\tilde{K}_N$
\end{algorithm}

\vspace{0.2cm}
\noindent \textbf{Zeroth-order estimation error:} To establish the bound on the estimation error $\epsilon_{\widehat{\nabla}}:=\|\widehat{\nabla}J(\tilde{K}) - \nabla J(\tilde{K})\|_F$, we first define the unbiased estimator of the gradient of the smoothed cost $J(\tilde{K}+U)$, i.e., $\nabla J_r(\tilde{K}) := \frac{pn_u(n_u+n_y)}{r^2}\mathbb{E}[J(\tilde{K}+U)U]$, where the expectation is taken with respect to the random perturbation matrix $U$, with $\|U\|_F = r$. Therefore, we can write 
$$
\epsilon_{\widehat{\nabla}} \leq \left\| \widehat{\nabla} J(\tilde{K}) - \nabla J_r(\tilde{K}) \right\|_F + \left\| {\nabla} J_r(\tilde{K}) - \nabla J(\tilde{K}) \right\|_F,
$$
where in expectation, the first term represents the variance of the estimation and the second term the bias. Note that, by Lemma \ref{lemma:smoothness} the second term satisfies: 
$$
\left\| {\nabla} J_r(\tilde{K}) - \nabla J(\tilde{K}) \right\|_F =  \|\nabla\mathbb{E}[J(\tilde{K}+U)] -\nabla J(\tilde{K}) \|_F \leq L_gr.
$$

To control the variance term, one can invoke tools from matrix concentration inequalities \cite{vershynin2018high}—such as the matrix Bernstein inequality used in \cite{gravell2020learning}—to show that, with probability at least $1 - \delta$ for some $\delta \in (0,1)$:
$$
\left\| \widehat{\nabla} J(\tilde{K}) - \nabla J_r(\tilde{K}) \right\|_F \leq \frac{C_{\texttt{est}}\sigma_r \log(1/\delta)}{\sqrt{n_s}},
$$
for some sufficiently large constant $C_\texttt{est}$, where $\sigma_r = \mathcal{O}(1/r)$. Therefore, with high probability, the gradient estimation error is controlled as follows
\begin{align}\label{eq:gradient estimation error}
 \left\| \widehat{\nabla} J(K) - \nabla J(K) \right\|_F \leq L_gr + \frac{C_{\texttt{est}}\sigma_r \log(1/\delta)}{\sqrt{n_s}}.  
\end{align}

Hence, by appropriately tuning the smoothing radius $r$ and the number of samples $n_s$, the overall estimation error can be controlled within a desired precision. It is fair to ask why we have chosen a one-point estimator (with variance that scales as $\mathcal O(1/r^2)$) instead of a two-point estimator with $\mathcal O(1)$ variance scaling\cite{malik_derivative-free_2020}? Unfortunately, the presence of process and measurement noise prevents cost approximations under the same random seeds and so we must resort to the one-point estimator unless we wish to introduce unrealistic assumptions.

\begin{theorem}[Convergence]
\label{thm:K convergence}
Given an initial stabilizing controller $\tilde{K}_0 \in \mathcal{K}$, a small probability $\delta \in (0,1)$, and accuracy $\epsilon>0$. Suppose the number of samples $n_s$, smoothing radius $r$, step-size $\eta$, and number of iterations $N$ satisfy:
\begin{align*}
    n_s \geq \frac{24 C^2_{\texttt{est}} \mu_{\texttt{PL}}\sigma^2_r \log^2(1/\delta)}{\epsilon} ,\; r \leq \sqrt{\frac{\epsilon}{24\mu_{\texttt{PL}}L^2_g}},
\end{align*}
\begin{align*}
    \eta \leq \min\left\{\frac{1}{4L_g}, \frac{\mu_\text{PL}}{2}\right\}, \text{ and } N \geq  \frac{4\mu_{\texttt{PL}}}{\eta} \log\left(\frac{2\Delta J^\star(\tilde{K}_0)}{\epsilon}\right) , 
\end{align*}
where $\Delta J^\star(\tilde{K}_0) = J(\tilde{K}_0) - J(\tilde{K}^\star)$ is the initial optimality gap. Then Algorithm \ref{alg:model-free} produces $\tilde{K}_N$ that satisfies
\[
J(\tilde{K}_N) - J(\tilde{K}^\star) \leq \epsilon.
\]
with probability $1-\delta$.
\end{theorem}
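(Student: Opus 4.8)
\textbf{Proof proposal for Theorem~\ref{thm:K convergence}.}

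The plan is to run the standard inexact-gradient-descent-under-PL analysis, treating the zeroth-order estimate $\widehat{\nabla}J(\tilde K_n)$ as the true gradient $\nabla J(\tilde K_n)$ plus a bounded error $\epsilon_{\widehat\nabla}$ controlled by \eqref{eq:gradient estimation error}. First I would invoke the approximate-smoothness bound from Lemma~\ref{lemma:smoothness}: along the update $\tilde K_{n+1} = \tilde K_n - \eta\widehat{\nabla}J(\tilde K_n)$, a descent-lemma-type expansion gives
\[
J(\tilde K_{n+1}) - J(\tilde K_n) \le -\eta\,\langle \nabla J(\tilde K_n), \widehat{\nabla}J(\tilde K_n)\rangle + \tfrac{L_g\eta^2}{2}\|\widehat{\nabla}J(\tilde K_n)\|_F^2.
\]
Writing $\widehat{\nabla}J = \nabla J + e$ with $\|e\|_F \le \epsilon_{\widehat\nabla}$, this reduces to $J(\tilde K_{n+1}) - J(\tilde K_n) \le -\eta(1 - L_g\eta)\|\nabla J(\tilde K_n)\|_F^2 + c_1\eta\,\epsilon_{\widehat\nabla}^2$ for an absolute constant $c_1$, after absorbing cross terms via Young's inequality; the choice $\eta \le 1/(4L_g)$ makes the coefficient of $\|\nabla J\|_F^2$ at least $\tfrac{3\eta}{4}$ (or similar). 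Next I would apply gradient dominance (Lemma~\ref{lemma:gradient dominance}), $\|\nabla J(\tilde K_n)\|_F^2 \ge \mu_{\texttt{PL}}^{-1}(J(\tilde K_n) - J(\tilde K^\star))$, to turn the per-step bound into the linear recursion
\[
J(\tilde K_{n+1}) - J(\tilde K^\star) \le \Big(1 - \tfrac{\eta}{2\mu_{\texttt{PL}}}\Big)\big(J(\tilde K_n) - J(\tilde K^\star)\big) + c_1\eta\,\epsilon_{\widehat\nabla}^2,
\]
valid \emph{provided} $\tilde K_n$ stays in the sublevel set $\mathcal{K}_\alpha$ so that the constants $\mu_{\texttt{PL}}$, $L_g$ apply uniformly. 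Unrolling this geometric recursion gives $J(\tilde K_N) - J(\tilde K^\star) \le (1-\tfrac{\eta}{2\mu_{\texttt{PL}}})^N \Delta J^\star(\tilde K_0) + 2c_1\mu_{\texttt{PL}}\,\epsilon_{\widehat\nabla}^2$; choosing $N$ as in the statement drives the first term below $\epsilon/2$, and plugging the bound \eqref{eq:gradient estimation error} with the stated $r$ and $n_s$ (so that $L_g r \le \sqrt{\epsilon/(24\mu_{\texttt{PL}})}$ and $C_{\texttt{est}}\sigma_r\log(1/\delta)/\sqrt{n_s} \le \sqrt{\epsilon/(24\mu_{\texttt{PL}})}$, hence $\epsilon_{\widehat\nabla}^2 \le \epsilon/(6\mu_{\texttt{PL}})$ up to constants) drives the second term below $\epsilon/2$, yielding the claim. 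The failure probability $\delta$ enters only through the high-probability variance bound; a union bound over the $N$ iterations is absorbed into the $\log(1/\delta)$ factor (or one re-defines $\delta \leftarrow \delta/N$, which only changes constants).

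The main obstacle — and the step requiring genuine care rather than bookkeeping — is the \textbf{invariance of the sublevel set} $\mathcal{K}_\alpha$ under the noisy updates. Lemmas~\ref{lemma:gradient dominance} and~\ref{lemma:smoothness} only hold for $\tilde K \in \mathcal{K}_\alpha$, so the recursion above is circular unless I can certify a priori that every iterate remains stabilizing and within the sublevel set. The standard fix (as in \cite{fazel_global_2019,gravell2020learning,mohammadi2020linear}) is an induction: assuming $\tilde K_n \in \mathcal{K}_\alpha$, the per-step bound shows $J(\tilde K_{n+1}) - J(\tilde K^\star) \le J(\tilde K_n) - J(\tilde K^\star) + c_1\eta\epsilon_{\widehat\nabla}^2$, and since $\epsilon_{\widehat\nabla}^2$ is chosen $\mathcal{O}(\epsilon/\mu_{\texttt{PL}})$ small and the contraction term dominates once the gap exceeds $\Theta(\epsilon)$, the cost can never escape a slightly inflated sublevel set — so one works with $\alpha$ chosen generously enough (e.g. $\alpha = 2$, matching the factor-2 slack in the theorem's $N$ and the $2\Delta J^\star$ inside the log) that the whole trajectory is trapped. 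This also implicitly requires a uniform lower bound $\min_{\tilde K \in \mathcal{K}_\alpha}\underline\sigma(\Sigma_{\tilde K}) > 0$ and uniform upper bounds on $\|P_{\tilde K}\|$, $\|\tilde K\|$ over $\mathcal{K}_\alpha$ — i.e. compactness of the sublevel set in the relevant quotient — which is inherited from the corresponding LQR facts in \cite{fazel_global_2019} via $K = \tilde K S^{\star\dagger}$ together with the observation from \eqref{eq:lqg_cost_tildeK} that $J(\tilde K) - J(\tilde K^\star)$ depends on $\tilde K$ only through $K$. I would state this compactness as the definition of the constants $\mu_{\texttt{PL}}$, $L_g$ (consistent with how they appear in Lemmas~\ref{lemma:gradient dominance}, \ref{lemma:smoothness}) and fold the warm-up/finite-horizon truncation error into the $C_{\texttt{est}}$ constant, noting it decays geometrically in $T$ and so can be made negligible relative to $\epsilon$ by taking $T = \Omega(\log(1/\epsilon))$.
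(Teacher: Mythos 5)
Your proposal is correct and follows essentially the same route as the paper's proof: the descent inequality from Lemma~\ref{lemma:smoothness}, Young's inequality to split $\|\widehat{\nabla}J\|_F^2$ into the true gradient plus estimation error, the gradient-dominance bound of Lemma~\ref{lemma:gradient dominance} to obtain a geometric recursion, and the stated choices of $r$, $n_s$, $\eta$, $N$ to drive the contraction term and the accumulated bias/variance below $\epsilon$. The sublevel-set invariance you correctly flag as the delicate point is handled by the paper in a separate result (Theorem~\ref{thm:stability_sublevel}) via exactly the induction you sketch, so your treatment is, if anything, slightly more self-contained.
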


\begin{proof}
We begin the proof with the Lipschitz property of the gradient (Lemma \ref{lemma:smoothness}):
\begin{align*}
\Delta J(\tilde{K}_n) \leq  \left\langle \nabla J(\tilde{K}_{n}), \Delta \tilde{K}_n \right\rangle + \frac{L_g}{2} 
\| \Delta \tilde{K}_n \|_F^2,
\end{align*}
where $\Delta J(\tilde{K}_n) := J(\tilde{K}_{n+1}) - J(\tilde{K}_{n})$ denotes the one policy gradient step cost difference and $\Delta \tilde{K}_n := \tilde{K}_{n+1} - \tilde{K}_{n}$ is the one step controller difference. Therefore, we can write 
\begin{align}\label{eq:optimality gap iteration n}
\Delta J(\tilde{K}_n) 
&\leq -\eta \left\langle \nabla J(\tilde{K}_{n}), \widehat{\nabla} J(\tilde{K}_{n}) \right\rangle + \frac{L_g\eta^2}{2}
\| \widehat{\nabla} J(\tilde{K}_{n}) \|_F^2.
\end{align}

In addition, we control the norm of the estimated gradient using Young’s inequality:
\begin{align}\label{eq:norm of the estimated gradient}
&\|\widehat{\nabla} J(\tilde{K}_{n})\|_F^2 
\leq 2 \| \nabla J(\tilde{K}_{n}) \|_F^2 +2 \| \widehat{\nabla} J(\tilde{K}_{n}) - \nabla J(\tilde{K}_{n}) \|_F^2,
\end{align}
and by applying \eqref{eq:norm of the estimated gradient} to \eqref{eq:optimality gap iteration n} we obtain
\begin{align*}
\Delta J(\tilde{K}_n)
&\leq -\frac{\eta}{4} \| \nabla J(\tilde{K}_n) \|_F^2 
+ \frac{3\eta}{4} \| \widehat{\nabla} J(\tilde{K}_n) - \nabla J(\tilde{K}_n) \|_F^2,
\end{align*}
by selecting the step-size as $\eta \leq \frac{1}{4L_g}$. If the smoothing radius $r$ and number of samples $n_s$ satisfy
\begin{align}\label{eq:smoothing radius and number of samples}
    r \leq \frac{\sqrt{\epsilon_\texttt{est}}}{2 L_g},  \text{ and } n_s \geq \frac{4 C_\texttt{est}\sigma^2_r \log^2(1/\delta)}{\epsilon_{\texttt{est}}},
\end{align}
then from \eqref{eq:gradient estimation error} it follows that with probability $1-\delta$,
\begin{align*}
\Delta J(\tilde{K}_n)
&\leq -\frac{\eta}{4} \| \nabla J(\tilde{K}_n) \|_F^2 
+ \frac{3\eta}{4}\epsilon_{\texttt{est}}
\end{align*}
 for some sufficiently small estimation error $\epsilon_{\texttt{est}}$. We now leverage the gradient dominance property of the cost under the history representation (Lemma \ref{lemma:gradient dominance}) to write 
\begin{align}\label{eq:probability step equation}
\Delta J^\star(\tilde{K}_{n+1})
&\leq \left(1-\frac{\eta}{4 \mu_{\texttt{PL}}}\right) \Delta J^\star(\tilde{K}_{n}) 
+ \frac{3\eta}{4}\epsilon_{\texttt{est}},
\end{align}
where $\Delta J^\star(\tilde{K}) := J(\tilde{K}) - J(\tilde{K}^\star)$ denotes the optimality gap for a given stabilizing controller $\tilde{K} \in \mathcal{K}_{\alpha}$. Unrolling the above expression over $N$ iterations, we have 
\begin{align*}
\Delta J^\star(\tilde{K}_{N})
&\leq \left(1-\frac{\eta}{4 \mu_{\texttt{PL}}}\right)^N \Delta J^\star(\tilde{K}_{0}) 
+ 3\mu_{\texttt{PL}}\epsilon_{\texttt{est}},
\end{align*}
where we set the number of iterations $N$ according to 
\begin{align}\label{eq:number of iterations}
    N \geq \frac{4\mu_{\texttt{PL}}}{\eta} \log\left(\frac{2\Delta J^\star(\tilde{K}_0)}{\epsilon}\right),
\end{align}
with step-size $\eta \leq  \frac{\mu_{\texttt{PL}}}{2}$ to obtain 
\begin{align*}
    J(\tilde{K}_N) - J(\tilde{K}^\star) \leq \epsilon,
\end{align*}
for some estimation error $\epsilon_{\texttt{est}} \leq \frac{\epsilon}{6\mu_{\texttt{PL}}}$.
\end{proof}

\noindent\textbf{Discussion:} Theorem~\ref{thm:K convergence} establishes that, after a sufficiently large number of iterations, Algorithm~\ref{alg:model-free} returns a lifted controller $\tilde{K}_N$ that is $\epsilon$-close to the optimal lifted controller $\tilde{K}^\star$. This guarantee implies that the resulting control sequence $\{u_t\}_{t} = \tilde{K}_N z_{t,p}$ achieves near-optimal performance. To implement this control policy, we follow the same procedure as previously discussed: a warm-up input sequence is used to construct $z_{t,p}$ for $t \leq p$, after which the controller $\tilde{K}_N$ is applied. As a result, the system is steered towards near-optimal performance over time.

\begin{remark}[\textbf{Recovering $S^\star$ and $K^\star$}] \label{remark:recovering S} As discussed above, Theorem~\ref{thm:K convergence} establishes that a near-optimal control sequence for the LQG problem can be learned via a policy gradient method operating under the state-history representation. This eliminates the need to explicitly recover the history representation matrix $S^\star$ or the original controller $K^\star$. This is advantageous as implementing $K^\star$ would require access to the estimation of the system state, which is unavailable in the model-free setting. Nevertheless, one might naturally ask: is it possible to recover $S^\star$ from data, and subsequently reconstruct $K^\star$? This question has been addressed in prior work, notably \cite{guo_imitation_2023}, where $S^\star$ is learned from expert demonstrations via imitation learning. We note that such an approach could be extended to our setting, where $\tilde{K}_n$ and $S_n$ are jointly updated through policy gradient and least-squares regression on expert data, respectively. Provided sufficient expert data, both the history representation and the lifted controller can converge to their optimal solutions, allowing recovery of a near-optimal controller via $K_N = \tilde{K}_N S_n^\dagger$. However, we reiterate that this recovery is not necessary for executing a near-optimal control sequence for the LQG problem via policy gradient methods.
\end{remark}

\begin{remark}[\textbf{History length $p$ tradeoff}] It is also important to emphasize that increasing the history length $p$ accelerates the convergence of PG-LQG, as discussed and illustrated in Section~\ref{sec:LQG}. However, a larger $p$ also increases the length of the warm-up control sequence, and consequently, the required time horizon $T$ needed to ensure that the long-term behavior of the system is stable. This introduces a tradeoff between the history length $p$ and the amount of available data $T$. In low-data regimes, i.e., where collecting data over sufficiently long horizons is impractical, it may be preferable to parameterize the policy using a shorter history length $p$, provided that $p \geq n_y$ to ensure observability.
\end{remark}

\begin{theorem}[Per-Iteration Stability]\label{thm:stability_sublevel}
Given an initial stabilizing controller $\tilde{K}_0 \in \mathcal{K}$, a small probability $\delta \in (0,1)$, and accuracy $\epsilon>0$. Suppose the number of samples $n_s$, smoothing radius $r$, and step-size $\eta$ satisfy:
\begin{align*}
    n_s \geq \frac{4 C^2_{\texttt{est}} \alpha \Delta J^\star(\tilde{K}_0) \sigma^2_r \log^2(1/\delta)}{3\mu_{\texttt{PL}}} ,\; r \leq \sqrt{\frac{3 \mu_{\texttt{PL}}}{4 \alpha \Delta J^\star(\tilde{K}_0) L^2_g}},
\end{align*}
and $\eta \leq \frac{1}{4L_g}$, where $\Delta J^\star(\tilde{K}_0) =  J(\tilde{K}_0) - J(\tilde{K}^\star)$. Then, with probability at least $1-\delta$, it holds that $\tilde K_n\in\mathcal{K}_\alpha$ for all $n\ge0$.
\end{theorem}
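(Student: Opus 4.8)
\textbf{Proof proposal for Theorem~\ref{thm:stability_sublevel}.}

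The plan is to proceed by induction on $n$, showing that if $\tilde K_n \in \mathcal{K}_\alpha$ then $\tilde K_{n+1} \in \mathcal{K}_\alpha$ as well, with the base case $\tilde K_0 \in \mathcal{K}_\alpha$ holding trivially since $J(\tilde K_0) - J(\tilde K^\star) = \Delta J^\star(\tilde K_0) \le \alpha \Delta J^\star(\tilde K_0)$ (recall $\mathcal{K}_\alpha$ is defined with respect to $\tilde K' = \tilde K_0$, and $\alpha \ge 1$ without loss of generality). The key point is that membership in $\mathcal{K}_\alpha$ is a \emph{cost sublevel condition}, namely $\Delta J^\star(\tilde K_n) \le \alpha \Delta J^\star(\tilde K_0)$, and crucially that $J(\tilde K) < \infty$ on $\mathcal{K}_\alpha$ already encodes stability: if a controller has finite LQG cost it lies in $\mathcal{K}$, so it suffices to propagate the sublevel bound. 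This is the standard device used for PG-LQR in \cite{fazel_global_2019, gravell2020learning}.

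First I would invoke the one-step descent inequality established inside the proof of Theorem~\ref{thm:K convergence}: conditioned on $\tilde K_n \in \mathcal{K}_\alpha$, choosing $\eta \le \tfrac{1}{4L_g}$ and applying the smoothness bound (Lemma~\ref{lemma:smoothness}) together with Young's inequality gives, with probability at least $1-\delta$ over the $n$-th gradient estimate,
\begin{equation*}
\Delta J(\tilde K_n) \le -\frac{\eta}{4}\|\nabla J(\tilde K_n)\|_F^2 + \frac{3\eta}{4}\,\epsilon_{\widehat\nabla}^2,
\end{equation*}
where $\epsilon_{\widehat\nabla} \le L_g r + C_{\texttt{est}}\sigma_r \log(1/\delta)/\sqrt{n_s}$ is the zeroth-order estimation error from \eqref{eq:gradient estimation error}. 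Dropping the nonpositive gradient-norm term yields $\Delta J^\star(\tilde K_{n+1}) \le \Delta J^\star(\tilde K_n) + \tfrac{3\eta}{4}\epsilon_{\widehat\nabla}^2$. The stated choices of $r$ and $n_s$ are calibrated precisely so that $\epsilon_{\widehat\nabla}^2 \le \tfrac{3\mu_{\texttt{PL}}}{\alpha \Delta J^\star(\tilde K_0)\,\eta}\cdot(\text{a constant})$ — more concretely, so that the cumulative drift $\tfrac{3\eta}{4}\epsilon_{\widehat\nabla}^2$ summed appropriately never pushes the optimality gap past the threshold $\alpha \Delta J^\star(\tilde K_0)$. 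I would set each of the two contributions to $\epsilon_{\widehat\nabla}$ to be at most half of $\sqrt{(\alpha-1)\Delta J^\star(\tilde K_0)\cdot \tfrac{4}{3\eta}\cdot\text{(slack)}}$; matching $L_g r$ against this gives the bound on $r$, and matching the variance term gives the bound on $n_s$.

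The inductive step then closes as follows: since $\tilde K_n \in \mathcal{K}_\alpha$ means $\Delta J^\star(\tilde K_n) \le \alpha\Delta J^\star(\tilde K_0)$, and the descent inequality gives $\Delta J^\star(\tilde K_{n+1}) \le \Delta J^\star(\tilde K_n) + \tfrac{3\eta}{4}\epsilon_{\widehat\nabla}^2$, I need $\tfrac{3\eta}{4}\epsilon_{\widehat\nabla}^2 \le (\alpha - 1)\Delta J^\star(\tilde K_0)$ so that $\Delta J^\star(\tilde K_{n+1})$ stays below the threshold. Actually a cleaner route is to combine with gradient dominance (Lemma~\ref{lemma:gradient dominance}) to get the contraction $\Delta J^\star(\tilde K_{n+1}) \le (1 - \tfrac{\eta}{4\mu_{\texttt{PL}}})\Delta J^\star(\tilde K_n) + \tfrac{3\eta}{4}\epsilon_{\widehat\nabla}^2$ as in \eqref{eq:probability step equation}; its fixed point is $3\mu_{\texttt{PL}}\epsilon_{\widehat\nabla}^2$, so if $3\mu_{\texttt{PL}}\epsilon_{\widehat\nabla}^2 \le \alpha\Delta J^\star(\tilde K_0)$ and $\tilde K_n \in \mathcal{K}_\alpha$, then $\Delta J^\star(\tilde K_{n+1})$ is a convex combination of $\Delta J^\star(\tilde K_n) \le \alpha\Delta J^\star(\tilde K_0)$ and $3\mu_{\texttt{PL}}\epsilon_{\widehat\nabla}^2 \le \alpha\Delta J^\star(\tilde K_0)$, hence bounded by $\alpha\Delta J^\star(\tilde K_0)$, i.e., $\tilde K_{n+1} \in \mathcal{K}_\alpha$. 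Substituting $\mu_{\texttt{PL}}$ and splitting $\epsilon_{\widehat\nabla}$ into bias and variance parts reproduces exactly the stated thresholds on $r$ and $n_s$.

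The main obstacle I anticipate is the \emph{probabilistic bookkeeping across iterations}: the one-step estimation bound \eqref{eq:gradient estimation error} holds with probability $1-\delta$ per iteration, but the statement asserts $\tilde K_n \in \mathcal{K}_\alpha$ for \emph{all} $n \ge 0$ with a single probability $1-\delta$. A naive union bound over infinitely many iterations fails. The resolution — which I would make explicit — is that the descent inequality is only \emph{needed} on the event that $\tilde K_n$ is stabilizing (so that $J$ and $\nabla J$ are well-defined), and one runs the algorithm for the finite horizon $N$ of Theorem~\ref{thm:K convergence}; over $N$ iterations one takes $\delta \to \delta/N$ inside the $\log(1/\delta)$ factors (absorbed into $C_{\texttt{est}}$), or, alternatively, one argues that a failure at some iteration $n$ cannot by itself eject the iterate from $\mathcal{K}_\alpha$ provided the per-step drift budget is set conservatively — which is what the constant $4$ (rather than the tighter constants of Theorem~\ref{thm:K convergence}) in the denominators of the $n_s$ and $r$ bounds is buying. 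The secondary subtlety is ensuring $\nabla J(\tilde K_n)$ and the perturbed costs $J(\tilde K_n + U_i)$ are finite, which again follows because the inductive hypothesis places $\tilde K_n$ strictly inside $\mathcal{K}_\alpha$ and, for small enough $r$, the perturbation ball around $\tilde K_n$ remains in $\mathcal{K}$ by continuity of $\rho(A + B\tilde K S^{\star\dagger})$.
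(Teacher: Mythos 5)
Your proposal is correct and follows essentially the same route as the paper: induction on $n$, reusing the one-step contraction \eqref{eq:probability step equation} from the proof of Theorem~\ref{thm:K convergence}, and choosing the estimation error small enough that the additive term cannot push the optimality gap above $\alpha\Delta J^\star(\tilde{K}_0)$ --- your convex-combination phrasing is just a cleaner restatement of the paper's ``factor out $\alpha\Delta J^\star(\tilde{K}_0)$'' step. One small caveat: the condition your argument actually requires is $3\mu_{\texttt{PL}}\epsilon_{\texttt{est}} \le \alpha\Delta J^\star(\tilde{K}_0)$, i.e., $\epsilon_{\texttt{est}} \le \alpha\Delta J^\star(\tilde{K}_0)/(3\mu_{\texttt{PL}})$, whereas the thresholds on $r$ and $n_s$ in the theorem statement (and the paper's own proof) encode the reciprocal $\epsilon_{\texttt{est}} \le 3\mu_{\texttt{PL}}/(\alpha\Delta J^\star(\tilde{K}_0))$, so you do not ``reproduce exactly the stated thresholds'' as claimed --- this looks like an inversion typo in the paper, and your version is the one that closes the induction.
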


\begin{proof}

We prove by induction that all iterates remain in $\mathcal{K}_\alpha$.

\textbf{Base Case:} By assumption, $\tilde{K}_0 \in \mathcal{K}$, and so $\tilde{K}_0 \in \mathcal{K}_\alpha$.
\vspace{0.1cm}

\textbf{Inductive Step:} Suppose $\tilde{K}_n \in \mathcal{K}_\alpha$, i.e.,
\begin{align}\label{eq: iduction condition}
\Delta J^\star(\tilde{K}_{n}) \leq \alpha \Delta J^\star(\tilde{K}_0),
\end{align}
then we can leverage the proof of Theorem \ref{thm:K convergence}, from \eqref{eq:probability step equation}, along with \eqref{eq: iduction condition}, to obtain
\begin{align*}
\Delta J^\star(\tilde{K}_{n+1})
&\leq \alpha\left(1-\frac{\eta}{4 \mu_{\texttt{PL}}}\right) \Delta_0 
+ \frac{3\eta}{4}\epsilon_{\texttt{est}},
\end{align*}
with probability of $1 - \delta$.

Hence, if the conditions in \eqref{eq:smoothing radius and number of samples} are satisfied, and by factorizing $\alpha \Delta J^\star(\tilde{K}_0)$ from the right-hand side, we obtain
\begin{align*}
\Delta J^\star(\tilde{K}_{n+1})
&\leq \left( 1-\frac{\eta}{4 \mu_{\texttt{PL}}}  +\frac{3\eta}{4 \alpha \Delta_0}\epsilon_{\texttt{est}} \right) \alpha \Delta J^\star(\tilde{K}_0).
\end{align*}

Finally, by choosing $\epsilon_{\texttt{est}} \leq \frac{3\mu_{\texttt{PL}} }{\alpha \Delta_0 }$ we have the following:
\begin{align*}
\Delta J^\star(\tilde{K}_{n+1})
&\leq \alpha \Delta J^\star(\tilde{K}_0),
\end{align*}
i.e.  $\tilde{K}_{n+1} \in \mathcal{K}_\alpha$. By induction, the claim holds for all iterations.
\end{proof}

\noindent\textbf{Discussion:} Theorem \ref{thm:stability_sublevel} establishes that by carefully selecting the number of samples $n_s$, smoothing radius $r$, and step-size $\eta$, Algorithm \ref{alg:model-free} produces a lifted stabilizing controller for every iteration. Combined with Theorem \ref{thm:K convergence}, these results demonstrate that under the history parameterization of the controller, the PG-LQG problem admits a ``benign" setting, where global convergence is guaranteed. We also stress again the effect of the history representation $S^\star$ and consequently the history length $p$ appearing in the expressions of $\mu_{\texttt{PL}}$ and $L_g$, which results in the tradeoff between $p$ and horizon length $T$. As we increase $p$ to achieve fast convergence (i.e., as illustrated in Figure \ref{fig:different P}), it is necessary to consider long horizon length $T$ to guarantee the long-term closed-loop stable behavior of the system after the warm-up phase.

This is the first time global convergence guarantees are provided for the PG-LQG problem. In contrast to \cite{mohammadi_lack_2021,tang_analysis_2023}, which considers the classical parameterization of the LQG problem, and \cite{zhao_globally_2023} which considers the noise free, partially observable formulation, we demonstrate that under the history parameterization, the PG-LQG problem satisfies gradient dominance and smoothness conditions. In fact, we trade the classical separation principle in the estimation and control design of the LQG problem (as now $S^\star$ depends both on $L$ and $K^\star$) for a more suitable PG optimization landscape.

\section{Numerical Validation}\label{sec:numerics}

We now validate the convergence of our proposed approach\footnote{Code is available at \url{https://github.com/jd-anderson/PG-LQG}.}. The experiments are designed to demonstrate the effectiveness of policy gradient methods in solving the LQG problem under the history representation, with particular emphasis on learning an initial stabilizing lifted controller $\tilde{K}_0$ as well as on illustrating the convergence of Algorithm \ref{alg:model-free}.

 \subsection{Experimental Setup:}
\label{Experimental Setup}

We assess the performance of our model free PG‐LQG algorithm on an unstable system given by 
\[
A = 
\left[
\begin{array}{rrrr}
-0.2639 &  0.5924 & -0.6445 & -0.8047\\
 0.5288 &  0.4654 &  0.6087 &  0.0537\\
-0.2803 & -0.4883 &  0.1135 & -0.6962\\
-1.0480 &  0.2543 & -0.1278 & -0.1279
\end{array}
\right],
\]
\[
B^\top =
\left[
\begin{array}{rrrr}
-0.9313 &  2.0774 & -1.4758 & -0.2621\\
-1.0678 &  0.3084 & -0.7451 & -1.5536
\end{array}
\right],
\]
\[
C =
\left[
\begin{array}{rrrr}
 2.2795 & -0.6637 & -1.1390 & -0.8495\\
 0.4608 &  1.2424 &  1.4244 & -1.3973
\end{array}
\right],
\]
where $\mathrm{spec}(A)= \bigl\{-1.5,\;0.9415,\;0.3728 \pm 0.6378\,\mathrm{i}\bigr\}$.
The process and noise measurement covariances are given by $W = 0.01\,I_{4},\;V = 0.01\,I_{2}$ and the cost matrices are set to identity, i.e., $Q = I_{4},\;R = I_{2}.$ All simulations are run for a horizon length of \(T=100\) steps, smoothing radius $r = 0.1$, number of samples $n_s = 1000 $, and step-size $\eta = 5 \times 10^{-9} $.

\subsection{Learning an initial stabilizing lifted controller $\tilde{K}_0$:} \label{subsec: learning K0}

As the system above is unstable, in order to be able to collect useful information in the  warm-up phase we present an approach to learn an initial stabilizing controller $\tilde{K}_0$ and overcome the assumption of having access to such stabilizing controller. We do so, by solving a sequence of discounted LQG problems under the proposed history representation.  This method is similar to the discount annealing method previously introduced in the context of the LQR problem, for which convergence and sample complexity guarantees were established in \cite{perdomo2021stabilizing, lamperski2020computing, zhao2024convergence, toso2025learning}. We emphasize that developing theoretical guarantees for our adaptation to the LQG setting is beyond the scope of this work and that no attempt to optimize its performance has been made.

To set the stage for our method, we begin by defining the discounted LQG problem. Given a discount factor $\gamma \in (0,1]$, the objective is to minimize the following discounted cost:
\begin{align}\label{discounted LQG}
    J_\gamma :=  \limsup_{T \to \infty}\frac{1}{T}\mathbb{E} \left[ \sum_{t=0}^{T-1} \gamma^t\left( y_t^\top Q^\prime y_t + u_t^\top R^\prime u_t \right) \right], \text{ s.t. \eqref{LQG}}.
\end{align}
which is equivalent to minimizing
\begin{align*}
    J_\gamma :=  \limsup_{T \to \infty}\frac{1}{T}\mathbb{E} \left[ \sum_{t=0}^{T-1} \left( \tilde{y}_t^\top Q^\prime \tilde{y}_t + \tilde{u}_t^\top R^\prime \tilde{u}_t \right) \right],
\end{align*}
subject to the damped system dynamics:
\begin{align}\label{eq:damped_system}
    \tilde{x}_{t+1} &= A_\gamma \tilde{x}_t + B_\gamma \tilde{u}_t + \gamma^{\frac{t+1}{2}}w_t, \notag \\
    \tilde{y}_t &= C \tilde{x}_t + \gamma^{\frac{t}{2}}v_t, \text{ for } t = 0,1,2,\ldots,
\end{align}
where $A_\gamma = \sqrt{\gamma} A$ and $B_\gamma = \sqrt{\gamma}B$ are the damped system matrices, and $\tilde{x}_t = \gamma^{-t/2}x_t$, $\tilde{u}_t = \gamma^{-t/2}u_t$, and $\tilde{y}_t = \gamma^{-t/2}y_t$ denote the state, input, and output of the damped system, respectively. In addition, $Q^\prime \succeq 0$ and $R^\prime \succ 0$ are the cost matrices of the discounted problem. Note that $Q^\prime$ and $R^\prime$ are simply artifacts in this approach, as we do not care about optimality in this stage. 

Under the proposed history representation, we aim to minimize $J_\gamma(\tilde{K})$ over the set of stabilizing controllers for the corresponding damped system:
$$
\tilde{\mathcal{K}}_\gamma := \left\{ \tilde{K} \,\middle|\, \sqrt{\gamma}\,\rho\left(A + B \tilde{K} S^{\star\dagger}\right) < 1 \right\}.
$$

An important observation is that if the discount factor $\gamma$ is sufficiently small, specifically $\gamma \leq 1/\rho(A)^2$, then $\tilde{K} = 0$ stabilizes the corresponding damped system. This motivates our approach to learn an initial stabilizing controller $\tilde{K}_0 \in \mathcal{K}$ for Algorithm~\ref{alg:model-free} by first selecting a small enough discount factor $\gamma$ such that $\tilde{K} = 0$ is stabilizing. With this initial controller in hand, we employ PG to solve the discounted LQG problem~\eqref{discounted LQG}, yielding the controller $K^\gamma_N$, which denotes the result of running PG-LQG for $N$ iterations at the current discount factor $\gamma$. 

To gradually approach the original (undiscounted) LQG problem, we iteratively increase the discount factor, ensuring at each step that the controller $K^\gamma_N$ remains stabilizing for the updated damped system. This is achieved through a binary search procedure, as in~\cite{perdomo2021stabilizing}, to find a new discount factor $\gamma_+ \in [\gamma, 1]$. For this purpose, we select $\gamma_+$ such that
\begin{align*}\label{eq:discount factor update}
    c_1 J_\gamma(K^\gamma_N) \leq J_{\gamma_+}(K^\gamma_N) \leq  c_2 J_\gamma(K^\gamma_N),
\end{align*}
for some positive constants $c_1$ and $c_2$. That is, the updated discount factor $\gamma_+$ is selected to ensure that $K^\gamma_N$ remains stabilizing for the PG-LQG problem under the new discount factor $\gamma_+$. Figure~\ref{fig:learning_K0} illustrates the evolution of the spectral radius of the closed-loop system, i.e., $\rho(A + B \tilde{K} S^{\star\dagger})$, and discount factor $\gamma$, over the iterations of the discount annealing procedure for the LQG problem under the proposed history representation and experimental setup. As shown, after a finite number of discount factor updates, the method successfully learns an initial stabilizing controller $\tilde{K}_0 \in \mathcal{K}$ directly from trajectory data. This controller is then used to initialize Algorithm~\ref{alg:model-free}, which proceeds to learn a near-optimal LQG controller. In the following, we show the convergence of Algorithm~\ref{alg:model-free}.

\begin{figure}[h]
    \centering
    \includegraphics[width=1.0\linewidth]{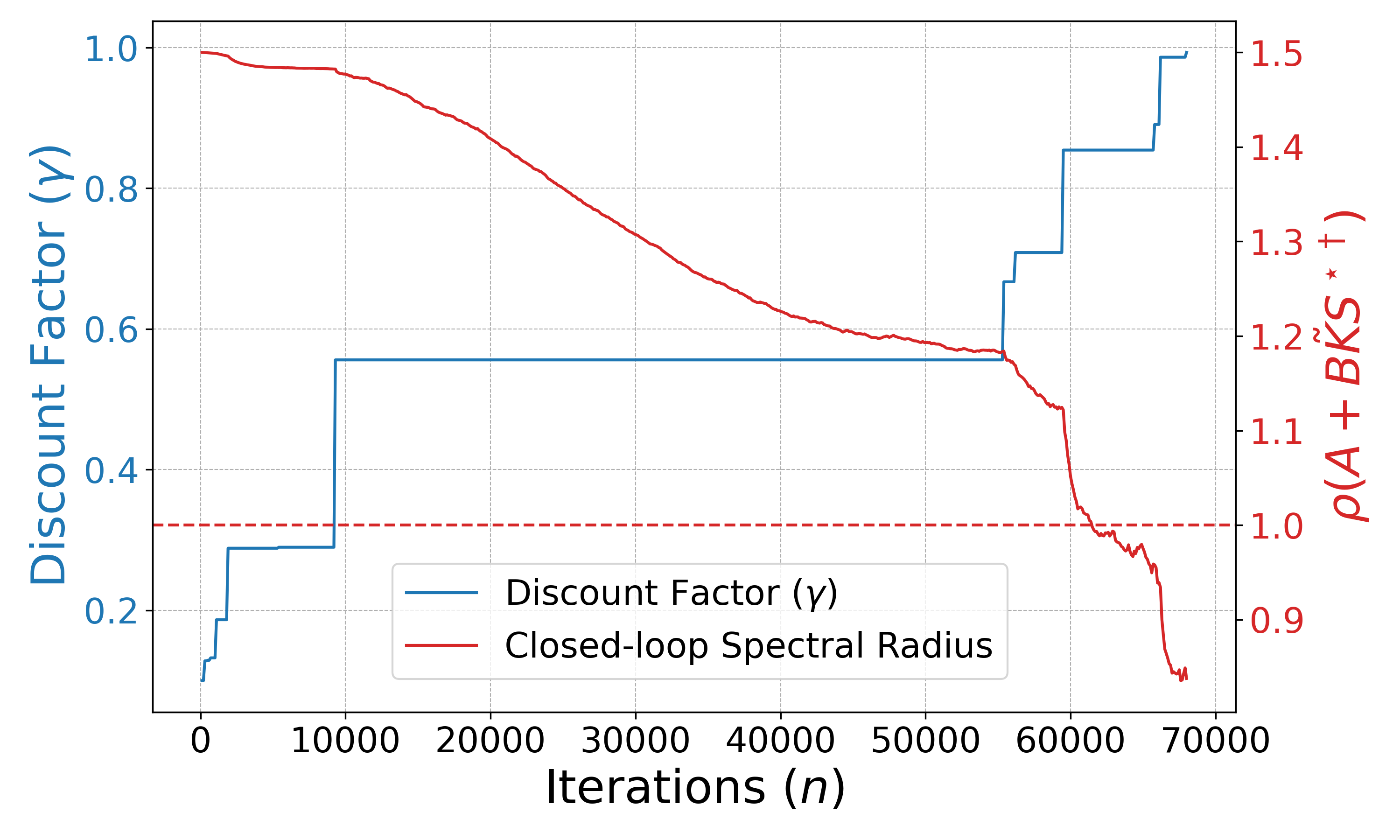}
    \caption{Evolution of the discount factor $\gamma$ and the closed-loop spectral radius during the search for an initial stabilizing controller, with $c_{1}=1\times10^{-5}$ and $c_{2}=2.5\times10^{-5}$.}

    \label{fig:learning_K0}
\end{figure}
\subsection{Discussion:}

Figure~\ref{fig:modelfree} illustrates the convergence of Algorithm~\ref{alg:model-free} in the model-free setting. As observed, the presence of measurement and process noise, and gradient estimation errors introduce a bias, resulting in a non-zero optimality gap at convergence. Nevertheless, the algorithm consistently converges toward a near-optimal solution. This result confirms our trends on the global convergence in the model-free setting.

Beyond confirming convergence, these results highlight several important tradeoffs. First, the choice of history length $p$ critically impacts the convergence rate, as longer histories lead to better conditioning of the optimization landscape but also increase the warm-up horizon. Second, while the policy gradient method achieves near-optimal performance, the presence of noise and gradient estimation error prevents exact recovery of the optimal controller, even with large sample sizes. This opens avenues for improving sample efficiency, reducing estimation bias, or incorporating variance-reduction techniques as in \cite{toso2024oracle}.

Moreover, while recovering the classical controller $K^\star$ is not necessary for control implementation, we can leverage the approach outlined in Remark \ref{remark:recovering S} to recover $K^\star$ in a model-free setting. By approximating the history representation matrix $S^\star$ from data—using, for example, expert demonstrations (as in \cite{guo_imitation_2023})—it becomes possible to reconstruct the original LQG controller as $K^\star = \tilde{K} S^{\star \dagger}$. However, we emphasize that executing the lifted controller $\tilde{K}$ is sufficient for achieving near-optimal performance for the PG-LQG problem.

Lastly, our findings reinforce a broader insight: that history representation offers a more suitable landscape for PG methods in the LQG setting. This suggests that leveraging history is a promising path forward for overcoming the challenges posed by noisy and partially observed settings.

\begin{figure}[h]
    \includegraphics[width=1\linewidth]{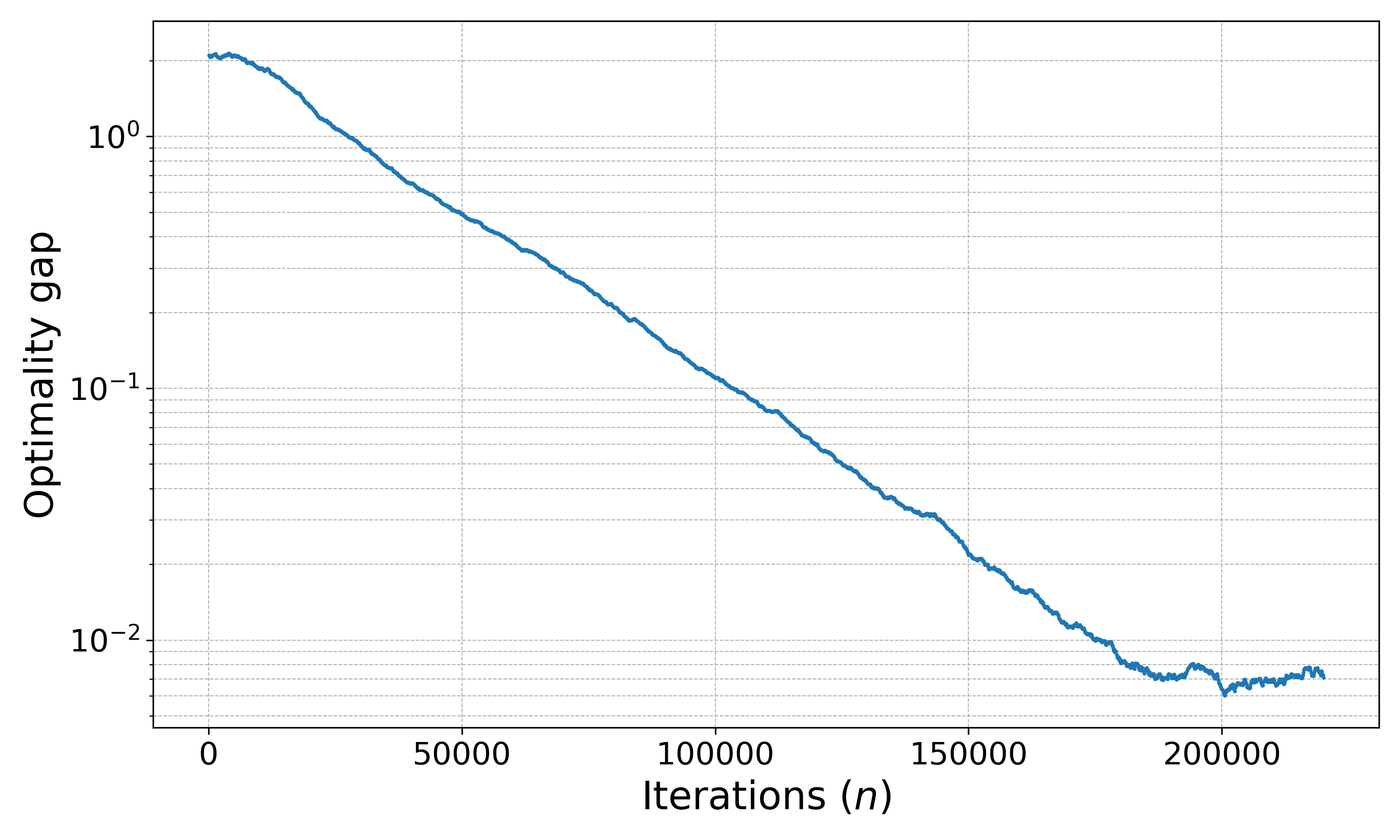}
    \caption{Optimality gap of Algorithm \ref{alg:model-free} over the number of iterations.}
    \label{fig:modelfree}
\end{figure}

\section{Conclusion}
We studied policy gradient for the LQG problem (PG-LQG). An important challenge in PG-LQG is the lack of gradient dominance under classical dynamic controller parameterization. By introducing a history representation of the controller, we reparameterized the optimization landscape to recover gradient dominance of the LQG cost. This reparameterization enabled us to establish global convergence guarantees for PG methods for both model-based and model-free settings. Our analysis also revealed the critical role of the history length in shaping the convergence rate, with longer horizons improving the conditioning of the lifted problem and reducing the number of iterations needed to reach a desired optimality gap. Future work would involve extending these results to the multi-task control design as in \cite{toso_meta-learning_2024}, as well as a detailed characterization of the cost of computing an initially stabilizing controller.

\section*{Acknowledgments} 

Leonardo F. Toso is funded by the Center for AI and Responsible Financial Innovation (CAIRFI) Fellowship and by the Columbia Presidential Fellowship. James Anderson is partially funded by NSF grants ECCS 2144634 and 2231350 and the Columbia Data Science Institute.

\bibliography{references.bib}
\bibliographystyle{IEEEtran}
\end{document}